\newtheorem{prop}{Proposition}[section]
\newtheorem{lemma}[prop]{Lemma}
\newtheorem{cor}[prop]{Corollary}
\newtheorem{theorem}[prop]{Theorem}
\DeclareMathOperator{\Rank}{Rank}
\newtheorem{axiom}{Axiom}
\newcommand{\eps}{\epsilon}
\newcommand{\ZZ}{\mathbb{Z}}
\newcommand{\RR}{\mathbb{R}}
\title{The joints problem for matroids}
\author{Larry Guth and Andrew Suk}
\begin{document}

\maketitle

\begin{abstract} We prove that in a simple matroid, the maximal number of joints that can be formed by $L$ lines is
$o(L^2)$ and $\Omega(L^{2 - \eps})$ for any $\eps > 0$. \end{abstract}

If $\frak L$ is a set of lines in $\RR^3$, a joint of $\frak L$ is a point $x \in \RR^3$ which lies in three non-coplanar lines of $\frak L$.  Using
a grid, it's straightforward to give examples with $L$ lines and $\sim L^{3/2}$ joints.  In the early 90's, Chazelle, Edelsbrunner, Guibas, Pollack, Seidel, Sharir, and Snoeyink defined joints and raised the problem how many joints can be formed by $L$ lines \cite{CEGPSSS}.  They proved that the maximal number of joints is $\le C L^{7/4}$ and conjectured that the maximal number of joints is $\le C L^{3/2}$.  After fifteen years, this conjecture was proven using an unexpected trick with high degree polynomials.
The first proof appeared in \cite{GK1}, and simplified proofs appeared in \cite{KSS} and \cite{Q}.  The simplified proofs are only about one
page long.

The joints problem has a very short proof with high degree polynomials and it seems hard to prove without high degree polynomials.  But it's not obvious what high degree polynomials have to do with the problem.  The problem only involves lines and planes, which are linear objects, but the proof involves highly non-linear polynomials.  In this paper, we try to study why the joints theorem is hard to prove using purely linear tools.

Richard Stanley suggested to us to look at the joints problem for matroids.  The joints problem is about points, lines, and planes in $\RR^3$.  Matroids are generalizations of vector spaces.  They have enough structure to define lines and planes and to set up the joints problem.  The setup works best in a simple matroid, which we will define below.  The lines and planes in a simple matroid obey many standard properties of lines and planes in $\RR^3$.  For example, if a line intersects a plane in at least two points, then the line is contained in the plane.

Our main result says that the joints theorem is false in simple matroids.  In fact, for any $\eps > 0$, we will construct a simple matroid containing a set of $L$ lines that determine $\ge L^{2 - \eps}$ joints.  For each $\eps$, the number $L$ can be made arbitrarily large.

This result helps to explain why the joints theorem is hard to prove without polynomials.  To prove the joints theorem, it is necessary to use
some piece of information which is true in $\RR^3$ and false in other simple matroids.  But most straightforward facts about lines and planes
in $\RR^3$ are true in any simple matroid.  We will give an explicit list of such facts later on.  These facts are not enough to
prove the joints theorem.

It's elementary to check that $L$ lines in a simple matroid determine $\le L^2$ joints - it follows because two lines intersect in at most one point.  We will improve this elementary bound, showing that the number of joints is $o(L^2)$.  So the maximal number of joints in a simple matroid grows more slowly than $L^2$ but faster than $L^{2 - \eps}$ for any $\eps > 0$.

We don't assume that the reader has any familiarity with matroids.  We will recall the definitions and give self-contained proofs of all facts about matroids that we use.  In the next section, after recalling the relevant definitions, we will state our theorems precisely.

Perhaps our results could also be of interest to mathematicians studying matroids.  Matroids are generalizations of vector spaces.  One question in matroid theory is to understand which properties of vector spaces hold more generally for matroids.
Sometimes a theorem about vector spaces holds more generally, and other times there are matroids that behave differently from vector spaces.  Our theorem gives a new example of how matroids can behave differently from vector spaces.

{\bf Acknowledgements.} Francisco Santos greatly simplified the proof of Proposition \ref{matroid} in the first version of this paper.

\section{Background on matroids}

In this section, we give background on matroids and state our results.  First we quickly introduce matroids and give enough definitions to state our theorems.  Then we come back and flesh out the description of matroids.

Suppose that $x_i$ are points in $\RR^n$.  The affine span of the set $\{ x_i \}$ is the intersection of all the affine subspaces containing the points $x_i$.  Algebraically, the affine span of the set $\{ x_i \}$ is the set of points of the form $\sum a_i x_i$, where
$\sum a_i = 1$.  The affine span of a set is always an affine subspace.  A set of $k$ points is affinely independent if its affine span has dimension $k - 1$.  Otherwise, it's affinely dependent.  As we will see, lines, planes, and the joints problem can all be rephrased in terms of the affinely independent sets of $\RR^3$.

A matroid is a pair $(E,\mathcal{I})$, where $E$ is a finite set and $\mathcal{I}$ is a list of ``independent'' subsets of $E$ obeying three axioms:

\begin{axiom}  The empty set is independent. \end{axiom}

\begin{axiom} A subset of an independent set is independent. \end{axiom}

\begin{axiom} If $X_1$ and $X_2$ are independent sets, and $|X_1| < |X_2|$, then there is an element $e \in X_2 \setminus X_1$ so that $X_1 \cup e$ is independent. \end{axiom}

Axioms 1-3 hold for the affine independent sets in $\RR^n$.  Because $\RR^n$ is infinite, the set $\RR^n$ and its affinely independent subsets don't quite make a matroid.  But for any finite $E \subset \RR^n$, the set $E$ and the affinely independent subsets of $E$ make a matroid.  Matroids capture some of the fundamental features of vector spaces, dimensions, etc.  But there are many matroids that don't come from an affine space or vector space.  Using just the structure $(E, \mathcal{I})$ and the three axioms, we can define lines and planes and prove many of the basic properties of lines and planes in $\RR^n$.

Suppose that $(E, \mathcal{I})$ satisfies the three axioms above.   The rank of a subset $Y \subset E$ is defined as the largest cardinality of an
independent set $X \subset Y$.  (In a matroid, since $E$ is finite, every set has a finite rank.)  If $E = \RR^3$ and $\mathcal{I}$ is the affinely independent subsets of $\RR^3$, then a point has rank 1, a line has rank 2,
a plane has rank 3, and the whole space $\RR^3$ has rank 4.  We also note that a line in $\RR^3$ is a maximal set of rank 2 with respect to inclusion: if we add any other point to a line, the rank jumps to 3.  Similarly, a point is
a maximal set of rank 1, a plane is a maximal set of rank 3, etc.

Based on the analogy with affinely independent sets in $\RR^n$, we define an affine $k$-dimensional flat in a matroid
$M$ to be a set of rank $k+1$ which is maximal with respect to inclusion.  For low values of $k$ we will use
simpler words: a point is defined to be a maximal set of rank 1, a line a maximal set of rank 2, and a plane a maximal set of rank 3.

A matroid is called simple if every set of one or two elements is independent.  In a simple matroid, a set $X$ has rank 1 if and only if $X$ consists of a single element.  Therefore, in a simple matroid, the points are exactly the 1-element subsets of $E$.  We think of $E$ as the set of points of the simple matroid.  When we set up the joints problem, we will only work with simple matroids.

We now have enough definitions to set up the joints problem in a simple matroid.  Let $M$ be a simple matroid on a set $E$.  Suppose that
$\frak L$ is a set of lines in the matroid.  We say that some lines $l_1, l_2, l_3$ are coplanar if their union is contained in a plane
of the matroid $M$.   A point $x \in E$ is a joint for $\frak L$ if the point $x$ lies in three non-coplanar lines of $\frak L$.  We can
now pose the question: if $\frak L$ is a set of $L$ lines in a simple matroid, what is the maximal number of joints that $\frak L$ can determine?

As we will see below, two lines in a simple matroid intersect in at most one point.  Therefore, the number of joints formed by $L$ lines is
$\le L^2$.  Our first result slightly improves this trivial bound: $L$ lines in a simple matroid can only determine $o(L^2)$ joints.

\begin{theorem}\label{upperbound}
For any $\epsilon > 0$, there exists an integer $L_0 = L_0(\epsilon)$ such that, in any simple matroid $M$, any set of $L \geq L_0$ lines determines at most $\epsilon L^2$ joints.
\end{theorem}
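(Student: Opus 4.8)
\emph{Proof plan.} We will deduce Theorem~\ref{upperbound} from the triangle removal lemma. We use the following soft facts about a simple matroid, all consequences of submodularity of the rank function together with the description of a plane as a rank-$3$ flat (equivalently a maximal set of rank $3$): two distinct lines meet in at most one point; two intersecting lines lie in a unique plane; and if three distinct lines meet pairwise but not in a common point, then their pairwise intersection points are distinct and the three lines lie in a common plane. In particular, three lines that meet pairwise and are not coplanar are concurrent. We also use two elementary counting facts: at most $\binom{L}{2}$ pairs of lines of $\frak L$ are coplanar (a coplanar pair spans a unique plane), and every plane contains at most $L$ joints of $\frak L$ (a joint lying in a plane $\Pi$ lies on some line of $\frak L$ not contained in $\Pi$, that line meets $\Pi$ in only one point, so distinct joints of $\Pi$ pick out distinct lines).

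Assume for contradiction that $\frak L$ is a set of $L$ lines with more than $\epsilon L^2$ joints, with $L$ large. First prune: call a plane \emph{rich} if it contains more than $L^{2/3}$ lines of $\frak L$, and delete from $\frak L$ every line lying in a rich plane. Distinct rich planes consume disjoint collections of coplanar pairs, so there are at most $O(L^{2/3})$ of them; each carries at most $L$ joints, so at most $O(L^{5/3})$ joints lie in rich planes. Any joint \emph{not} in a rich plane retains all of its lines under the pruning (a line through it lying in a rich plane would force that joint into the plane), so it stays a joint. Hence for $L$ large the pruned family $\frak L'$ still has $L'\le L$ lines and $N'>\tfrac{\epsilon}{2}L^2$ joints, while now every plane contains at most $L^{2/3}$ lines of $\frak L'$; note also $L'>\sqrt{N'}=\Omega(L)$, since each joint picks out a pair of its lines.

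Form a tripartite graph $G$ on three labelled copies of $\frak L'$, hence $3L'$ vertices. For each joint $x$ of $\frak L'$ fix an ordered triple $(a,b,c)$ of non-coplanar (hence distinct) lines of $\frak L'$ through $x$, and add the triangle with $a$ in the first copy, $b$ in the second, $c$ in the third. An edge of $G$ between the $i$-th and $j$-th copies determines the unique intersection point of the two lines it joins, so the $N'$ planted triangles are pairwise edge-disjoint; consequently any triangle-free subgraph of $G$ misses at least $N'>\tfrac{\epsilon}{2}L^2$ edges. Conversely, for \emph{any} triangle of $G$ on lines $a,b,c$ (in the first, second, third copies) the three edges force $a,b$ and $b,c$ and $a,c$ to meet pairwise; if the three intersection points agree, this is one of the planted triangles; otherwise $a,b,c$ are distinct coplanar lines forming an ordinary triangle in the plane they span. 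Writing $m_\Pi$ for the number of lines of $\frak L'$ in a plane $\Pi$, and using $m_\Pi\le L^{2/3}$ together with $\sum_\Pi\binom{m_\Pi}{2}\le\binom{L'}{2}$, the number of triangles of $G$ is $N'+6\sum_\Pi\binom{m_\Pi}{3}=O\!\left(L^{8/3}\right)=o\!\big((L')^3\big)$.

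Now finish with the triangle removal lemma: fix $\mu$ with $9\mu<\tfrac{\epsilon}{2}$ and let $\nu=\nu(\mu)>0$ be such that any $n$-vertex graph with at most $\nu n^3$ triangles becomes triangle-free after deleting at most $\mu n^2$ edges. For $L$ large enough in terms of $\epsilon$, the bound $O(L^{8/3})$ is below $\nu(3L')^3$ (using $L'=\Omega(L)$), so $G$ can be made triangle-free by deleting at most $\mu(3L')^2=9\mu(L')^2\le 9\mu L^2<\tfrac{\epsilon}{2}L^2$ edges, contradicting the previous paragraph. Hence no such $\frak L$ exists. The main obstacle is the middle step: the auxiliary graph must be arranged so that, beyond the planted triangles, it has only $o(n^3)$ triangles, and this is precisely why the pruning of rich planes is needed — without it a single plane carrying almost all of $\frak L'$ would swamp the triangle count — together with the observation that the pruning destroys only a negligible fraction of the joints, because so few lines can poke out of any one plane. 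The remaining ingredients (the matroid incidence facts and the edge-disjoint-triangles application of the removal lemma) are routine.
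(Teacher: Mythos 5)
Your proof is correct, and it rests on the same engine as the paper's (prune planes rich in lines of $\frak L$, then apply the triangle removal lemma to a graph built from the joints), but the execution is genuinely different at several points, so a comparison is worthwhile. The paper prunes planes containing more than $2/\epsilon$ lines, a constant threshold; restricts attention to joints of multiplicity between $3$ and $4/\epsilon$ so as to control concurrent (``degenerate'') triangles; uses the removal lemma in the Ruzsa--Szemer\'edi form ``$\epsilon n^2$ edge-disjoint triangles imply $\delta n^3$ triangles''; and closes by extracting a single pair of lines participating in $\Omega(L)$ non-degenerate triangles, which forces a plane containing more than $2/\epsilon$ lines of the pruned family --- a contradiction with the pruning. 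You instead prune at the polynomial threshold $L^{2/3}$, plant the joints as edge-disjoint triangles in a tripartite graph whose only edges are the planted ones --- so a concurrent triangle of the graph is automatically a planted one, which eliminates the need for the degree split and the degenerate-triangle count --- then bound the total number of triangles by $O(L^{8/3})=o(L^3)$ directly from the pruning, and invoke the removal lemma in its usual ``few triangles imply removable'' form to contradict the edge-disjointness of the $N'$ planted triangles. Your route buys a cleaner endgame (the contradiction comes straight from the removal lemma, with no heavy-pair extraction) at the cost of the tripartite bookkeeping; the paper's route needs only the weaker edge-disjoint-triangles corollary of the removal lemma and a one-part graph. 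The matroid input is identical in both arguments: two lines meet in at most one point, a line meeting a plane in two points lies in it, hence a plane contains at most $L$ joints and two intersecting (or coplanar) lines span a unique plane.
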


\noindent Our next result says that this upper bound is nearly tight.

\begin{theorem} \label{mainthm} For any $\eps > 0$, for arbitrarily large numbers $L$, we will construct a simple matroid $M$ and a set of $L$ lines in the matroid which determines $\ge L^{2 - \eps}$ joints.  \end{theorem}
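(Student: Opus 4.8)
The plan is to decouple the construction into a purely combinatorial part and a matroid‑theoretic part, and then to combine them.

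\emph{Step 1: a combinatorial skeleton.} First I would produce, for every $\eps>0$ and arbitrarily large $L$, a finite incidence geometry $G=(P,\mathcal L)$ --- a set $P$ of ``points'' together with a family $\mathcal L$ of subsets of $P$ (the would‑be lines) --- such that (a) any two members of $\mathcal L$ meet in at most one point; (b) $|\mathcal L|=L$ while $|P|\ge L^{2-\eps}$; (c) a $1-o(1)$ fraction of the points of $P$ lie on at least three members of $\mathcal L$; and (d) $G$ avoids the incidence patterns that would later force coplanarity --- in particular, whenever a point lies on three or more lines those lines are concurrent there, and no further incidence of $G$ pins one of them into the ``plane'' of the other two. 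Condition (b) is exactly the bound we are after ($L$ lines carrying $\ge L^{2-\eps}$ joints), and (a) is what lets us later call $\mathcal L$ a set of lines of a matroid at all. I would build $G$ by starting from the axis‑parallel grid in $\RR^3$ (which already realizes the exponent $3/2$) and iterating an amplification step that lengthens lines while amortizing their number, driving $\log L/\log|P|$ down from $2/3$ towards $1/2$, checking that (a), (c), (d) survive each step. Naive product operations are not enough here: they blow up $|P|$ faster than they improve the ratio $|\mathcal L|/|P|$, so the step must reuse existing points in longer lines rather than merely take disjoint copies.

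\emph{Step 2: realizing the skeleton as a simple matroid.} The second ingredient is a general principle (Proposition~\ref{matroid}): a geometry $G$ satisfying (a) and (d) can be realized inside a simple matroid $M$ of arbitrarily large rank so that each $\ell\in\mathcal L$ is a line of $M$, lines sharing a point in $G$ share exactly that point in $M$, and a triple of concurrent lines of $M$ spans rank $4$ unless $G$ forces it to be coplanar. I would construct $M$ as the ``freest'' matroid carrying these constraints: take the direct sum over $\ell\in\mathcal L$ of the rank‑$2$ uniform matroids on the sets $\ell$, then glue along the points that coincide in $G$ (iterated parallel connection, or equivalently a direct description of the rank function), each gluing keeping the affected $\ell$'s as rank‑$2$ flats; simplicity is automatic once one checks no two points get identified. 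The substance of the proposition is that (d) rules out the offending incidence chains, so the only forced coplanarities in $M$ are the unavoidable ones --- recall that in any matroid three pairwise‑meeting lines through distinct points are automatically coplanar. Combining the two steps proves the theorem: by (c) and the proposition, a $1-o(1)$ fraction of the $|P|\ge L^{2-\eps}$ points of $P$ lie on three non‑coplanar lines of $M$, i.e.\ are joints for $\mathcal L$.

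\emph{Where the difficulty lies.} Theorem~\ref{upperbound} warns us in advance that the programme must be delicate: the realization principle of Step 2 cannot survive if Step 1 is pushed all the way to $|\mathcal L|=\Theta(|P|^{1/2})$, since $o(L^2)$ joints is a hard ceiling. Concretely, a ``too efficient and too generic'' geometry --- say three mutually transversal resolutions of $[n]^2$ into lines --- cannot be realized with all its triples non‑coplanar, so some Pappus/Desargues‑type relation must get forced; relatedly, no such construction can live inside a vector space, since a generic projection to $\RR^3$ would otherwise contradict the joints theorem. The crux is therefore to carry out Step 1's amplification while keeping every joint‑bearing triple concurrent and un‑pinned (hypothesis (d) intact at each level of the recursion) and to prove that, under that constraint, the exponent really does tend to $2$ rather than stalling at some value below it, as the naive grid products do.
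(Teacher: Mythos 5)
Your two-step decomposition matches the architecture of the paper's proof (a combinatorial triangle-free configuration, then a realization lemma turning it into a simple matroid), and your Step~2 is morally Proposition~\ref{matroid}: you correctly identify that the only obstruction to realizing all concurrent triples as non-coplanar is the presence of a ``triangle'' (three lines pairwise meeting in three distinct points, which are forced coplanar by Proposition~\ref{incidlp}), so your condition (d) is the right hypothesis. The paper realizes the matroid differently --- not as a free amalgam of rank-$2$ uniform matroids (whose existence for an incidence structure with many cycles is itself a nontrivial amalgamation problem you would still have to settle), but by an explicit rank-$\le 4$ matroid: declare dependent exactly the triples on a line of $\frak L$, the quadruples with three collinear points or contained in an ``angle'' $l\cup l'$ with $l\cap l'\cap E\neq\emptyset$, and everything of size $\ge 5$; the exchange axiom is then checked by hand, with triangle-freeness used in exactly one case. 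Either route could work, but yours is asserted rather than verified.

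The genuine gap is Step~1. You never produce the configuration; you explicitly discard the naive grid products and then invoke an unspecified ``amplification step that lengthens lines while amortizing their number,'' with no indication of what the step is or why the exponent tends to $2$ rather than stalling. This is precisely the hard part, and no such iterative amplification from the $\RR^3$ grid is known. The paper's construction is not iterative at all: it takes the three parallel families (horizontal, vertical, diagonal) of lines in an $N\times N$ planar grid and restricts the point set to $E=\{(a,b): a+b\in B\}$ where $B\subset\{1,\dots,N\}$ is a Behrend set with no $3$-term arithmetic progression and $|B|\ge N^{1-\eps}$. A triangle among one horizontal, one vertical, and one diagonal line forces the three sums $a_i+b_i$ to form a $3$-term progression in $B$, so $(E,\frak L_0)$ is triangle-free, while $|E|\gtrsim N^{2-2\eps}$ and every point of $E$ is a triple point. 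Without this number-theoretic input (Behrend's construction, as in Ruzsa--Szemer\'edi and Ajtai--Szemer\'edi) your Step~1 is an open promissory note, and the proof does not go through.
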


Now that we've stated our theorems, we come back and flesh out the definition of matroids.  Lines and planes in a simple matroid
share many basic properties with lines and planes in $\RR^n$.  Here are some examples.

\begin{prop} \label{incidlp} Let $M$ be a simple matroid.

\begin{enumerate}

\item Any two points are contained in a unique line.

\item If three points are not contained in a line, then they are contained in a unique plane.

\item If a line intersects a plane in two points, then the line is contained in the plane.

\item If two lines, $l_1, l_2$ intersect in a point, then $l_1 \cup l_2$ lies in a unique plane.

\end{enumerate}

\end{prop}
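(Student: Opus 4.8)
The plan is to reduce all four statements to elementary properties of the rank function, so first I would record the facts about $\Rank$ that follow from Axioms 1--3 but are not stated in the excerpt: (i) monotonicity, $\Rank(X)\le\Rank(Y)$ whenever $X\subseteq Y$, together with $\Rank(Y\cup\{e\})\le\Rank(Y)+1$; (ii) since $E$ is finite, starting from any set $Y$ and repeatedly adjoining elements that do not raise the rank yields a set $F\supseteq Y$ that is maximal among sets of rank $\Rank(Y)$, so every subset of $E$ lies in a flat of its own rank; and (iii) submodularity, $\Rank(A\cup B)+\Rank(A\cap B)\le\Rank(A)+\Rank(B)$. For (iii) I would take a maximal independent subset $I$ of $A\cap B$, use Axiom 3 to extend it to a maximal independent subset $J$ of $A\cup B$, note that $J\subseteq A\cup B$ forces $J=(J\cap A)\cup(J\cap B)$ with $(J\cap A)\cap(J\cap B)=J\cap A\cap B=I$ by maximality of $I$ in $A\cap B$, and conclude $\Rank(A\cup B)=|J|=|J\cap A|+|J\cap B|-|I|\le\Rank(A)+\Rank(B)-\Rank(A\cap B)$.

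The key observation behind every uniqueness claim is: if $F,F'$ are flats with $\Rank(F)=\Rank(F')=k$ and $\Rank(F\cap F')=k$, then $F=F'$. Indeed submodularity gives $\Rank(F\cup F')\le k$, monotonicity gives $\Rank(F\cup F')\ge\Rank(F)=k$, and since $F\subseteq F\cup F'$ with $F$ maximal among sets of rank $k$ we get $F\cup F'=F$, hence $F'\subseteq F$, and symmetrically $F\subseteq F'$.

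Each part is then a short rank computation using simplicity (any set of size $\le 2$ is independent, so two distinct points have rank $2$ and three distinct points have rank $2$ or $3$). For (1), $\{p,q\}$ has rank $2$, so it lies in a line by (ii); any two lines through $p,q$ have intersection of rank $\ge 2$, hence exactly $2$, so they coincide. For (2), three points not on a line cannot have rank $2$ (the rank-$2$ flat containing them from (ii) would be such a line), so they have rank $3$; (ii) gives a plane through them and the key observation gives uniqueness. For (3), picking two points $p,q\in l\cap\pi$, submodularity gives $\Rank(l\cup\pi)\le\Rank(l)+\Rank(\pi)-\Rank(l\cap\pi)\le 2+3-2=3=\Rank(\pi)$, so $l\cup\pi=\pi$ by maximality of $\pi$, i.e. $l\subseteq\pi$. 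For (4), read with $l_1\ne l_2$ meeting at $x$ (equivalently, meeting in a single point): part (1) forces $\Rank(l_1\cap l_2)=1$, so $\Rank(l_1\cup l_2)\le 2+2-1=3$; it is not $2$ (that would force $l_1=l_2$ by maximality of a line), hence it is $3$, and (ii) gives a plane containing $l_1\cup l_2$ while the key observation — applied to two such planes, whose intersection has rank $\ge 3$ — gives uniqueness.

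The main obstacle is entirely in the preliminaries, not in the four statements themselves: establishing submodularity of $\Rank$ (and the companion fact that a maximal independent subset of a set extends to a maximal independent subset of any larger set) cleanly from Axioms 1--3, and keeping straight throughout that ``flat'' means ``maximal set of its own rank.'' Once submodularity is in hand, (1)--(4) all fall to the same pattern: compute the rank of a relevant union using simplicity, then invoke maximality of flats.
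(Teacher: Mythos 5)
Your proof is correct. The core engine is the same as the paper's --- submodularity of $\Rank$ (the paper's Theorem \ref{rankunion}, proved by exactly the same extension-and-counting argument you sketch) plus maximality of flats --- but you organize the deduction quite differently. The paper builds a closure operator $Cl(X)$ as the union of all sets containing $X$ of the same rank, then develops a small toolkit (monotonicity of closure, the fact that an intersection of flats is a flat, Lemma \ref{uniquespan}, Proposition \ref{flatinflat}) and derives (1)--(4) from it; in particular its proof of (3) shows $l\cap\pi$ is a rank-$2$ flat inside $l$ and concludes $l\cap\pi=l$, and its proof of (4) avoids any rank computation on $l_1\cup l_2$ by choosing explicit points $p,p_1,p_2$ and reducing to parts (2) and (3). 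You instead obtain flats by greedily adjoining rank-preserving elements (legitimate, since $E$ is finite and a set to which no element can be adjoined without raising the rank is maximal of its rank), and you compress all four uniqueness claims into the single observation that two rank-$k$ flats whose intersection has rank $k$ coincide --- which conveniently does not require knowing that the intersection is itself a flat, so you never need the paper's Theorem \ref{flatint}. Your direct estimates $\Rank(l\cup\pi)\le 3$ in (3) and $\Rank(l_1\cup l_2)=3$ in (4) are clean and arguably shorter than the paper's reductions; what the paper's route buys in exchange is the closure operator and the flat-intersection theorem as reusable general facts, stated once and cited repeatedly. All the individual steps you give check out: in (1) the intersection of two lines through $p,q$ has rank exactly $2$ by simplicity and monotonicity; in (4) the case $\Rank(l_1\cup l_2)=2$ is correctly excluded by maximality of $l_1$ and $l_2$.
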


Because of Theorem \ref{mainthm}, these properties of points and lines are not enough to prove that $L$ lines in
$\RR^3$ determine $\lesssim L^{1.99}$ joints.

We now give an outline of the rest of the paper.   In  Section \ref{upperboundsec}, we prove Theorem \ref{upperbound}.
The only results about matroids used in the proof of Theorem \ref{upperbound} are contained in Proposition \ref{incidlp}.
In Section \ref{construction}, we prove Theorem \ref{mainthm}.  The proof uses only the definition of a matroid.

In the rest of this section, we prove some fundamental (classical) results about matroids, building up to Proposition \ref{incidlp}.  Our exposition follows \cite{O}.  Chapter 1
of \cite{O} contains a good introduction to matroids, including all these results and more.  We give a more compact presentation of the particular
results that are relevant in this paper.

A fundamental result about matroids describes how the rank of a union behaves.

\begin{theorem} \label{rankunion} If $X$ and $Y$ are sets in a matroid, then

$$\Rank(X \cup Y) + \Rank(X \cap Y) \le \Rank(X) + \Rank(Y).$$

\end{theorem}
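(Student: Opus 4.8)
The plan is to prove the inequality by choosing compatible bases, where by a \emph{basis} of a set $Z \subseteq E$ I mean a maximal independent subset of $Z$. The one tool needed is the following extension principle, which I would establish first as a consequence of Axiom 3: if $I \subseteq Z$ is independent, then $I$ is contained in some basis of $Z$, and all bases of $Z$ have the same cardinality, namely $\Rank(Z)$. Indeed, if $I$ is independent but not maximal in $Z$, choose a basis $B$ of $Z$; if $|B| > |I|$, Axiom 3 produces $e \in B \setminus I \subseteq Z \setminus I$ with $I \cup \{e\}$ independent, so we may enlarge $I$ and repeat until it becomes maximal. The same augmentation argument applied to two bases of $Z$ shows they cannot have different sizes, and that this common size is the largest cardinality of an independent subset of $Z$, i.e.\ $\Rank(Z)$.

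With this in hand the proof is short. Let $B_0$ be a basis of $X \cap Y$, so $|B_0| = \Rank(X \cap Y)$. Since $B_0$ is an independent subset of $X \cup Y$, the extension principle lets us enlarge it to a basis $B$ of $X \cup Y$, so $B_0 \subseteq B$ and $|B| = \Rank(X \cup Y)$. Now put $B_X = B \cap X$ and $B_Y = B \cap Y$. By Axiom 2 these are independent, being subsets of the independent set $B$, and they lie in $X$ and $Y$ respectively, so $|B_X| \le \Rank(X)$ and $|B_Y| \le \Rank(Y)$. Because $B \subseteq X \cup Y$ we have $B_X \cup B_Y = B$, and because $B_0 \subseteq B$ and $B_0 \subseteq X \cap Y$ we have $B_0 \subseteq B_X \cap B_Y$, hence $|B_X \cap B_Y| \ge |B_0| = \Rank(X \cap Y)$. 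Now apply inclusion–exclusion for finite sets: $\Rank(X) + \Rank(Y) \ge |B_X| + |B_Y| = |B_X \cup B_Y| + |B_X \cap B_Y| = |B| + |B_X \cap B_Y| \ge \Rank(X \cup Y) + \Rank(X \cap Y)$, which is exactly the claimed inequality.

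The argument has essentially no hard part beyond the extension principle itself, but the one point requiring care — and the step I would flag as the crux — is the \emph{order} in which the two bases are chosen: one must first build a basis $B_0$ of the smaller set $X \cap Y$ and only then extend it to a basis $B$ of $X \cup Y$, so that $B_0$ sits inside $B \cap X \cap Y$ and gives a lower bound on $|B_X \cap B_Y|$ in the direction we need. Choosing a basis of $X \cup Y$ first, or choosing the two bases independently, would not give the containment $B_0 \subseteq B_X \cap B_Y$ and the estimate would break.
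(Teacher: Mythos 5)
Your proof is correct and is essentially the paper's own argument: both extend a maximal independent subset of $X \cap Y$ to a maximal independent subset of $X \cup Y$ via repeated application of Axiom 3, intersect the result with $X$ and $Y$, and conclude by inclusion--exclusion. The only cosmetic difference is that you package the augmentation step as a general ``extension principle'' in basis language, whereas the paper carries it out inline for this particular pair of sets.
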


\begin{proof} Let $I_{X \cap Y}$ be an independent subset of $X \cap Y$ with cardinality $\Rank(X \cap Y)$.
The key observation in the proof is that $I_{X \cap Y}$ is contained in an independent set $I_{X \cup Y} \subset
X \cup Y$ with cardinality $\Rank (X \cup Y)$.  By definition, there is an independent set $I'_{X \cup Y} \subset
X \cup Y$ with cardinality $\Rank (X \cup Y)$.  But we don't necessarily have $I_{X \cap Y} \subset I'_{X \cup Y}$.
If $|I_{X \cap Y} | = |I'_{X \cup Y}|$, then we are done.  If not, Axiom 3 tells us that we can find $e_1 \in I'_{X \cup Y}$ so that $I_{X \cap Y} \cup e_1$ is an independent set of cardinality $\Rank(X \cap Y) + 1$.  If $|I_{X \cap Y} \cup e_1| = |I'(X \cup Y)|$, we are done.  If not, Axiom 3 tells us that we can find $e_2 \in I'_{X \cup Y}$ so that
$I_{X \cap Y} \cup e_1 \cup e_2$ is an independent set of cardinality $\Rank (X \cap Y) + 2$.  Continuining in this
way, we build an independent set $I_{X \cup Y}$ with $I_{X \cap Y} \subset I_{X \cup Y} \subset X \cup Y$ and
$|I_{X \cup Y}| = \Rank(X \cup Y)$.

We let $I_{X} := X \cap I_{X \cup Y}$, and we let $I_Y := Y \cap I_{X \cup Y}$.  By Axiom 2, $I_X$ and $I_Y$ are independent, so $|I_X| \le \Rank(X)$ and $|I_Y| \le \Rank (Y)$.  Also, $I_X \cap I_Y$ contains $I_{X \cap Y}$.  Now the rest of the proof is just counting.

$$ \Rank (X \cup Y) + \Rank (X \cap Y) = | I_{X \cup Y} | + | I_{X \cap Y} | \le $$

$$ \le | I_X \cup I_Y | + | I_X \cap I_Y | = |I_X| + |I_Y| \le \Rank(X) + \Rank(Y). $$

\end{proof}

Remark. There are several places in the fundamental theorems where we use the fact that the ground set $E$ is finite.  For example, since $E$ is finite, the rank of a set $X \subset E$ is clearly finite, and we can find an independent set $I \subset E$ with $|I| = \Rank (X)$.  These issues motivate choosing the definition of a matroid so that $E$ is finite.

The following special case of Theorem \ref{rankunion} will be particularly important for us.

\begin{cor} \label{rankunion'} If $X \subset Y_1, Y_2$, and $\Rank(X) = \Rank(Y_1) = \Rank(Y_2)$, then $\Rank (Y_1 \cup Y_2) =
\Rank(X)$.
\end{cor}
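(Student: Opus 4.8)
The plan is to deduce this immediately from the submodular inequality of Theorem \ref{rankunion} together with the monotonicity of rank. So the first step is to record the (essentially trivial) fact that rank is monotone: if $A \subset B$, then any independent subset of $A$ is also an independent subset of $B$ by Axiom 2, hence $\Rank(A) \le \Rank(B)$.

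Next I would apply Theorem \ref{rankunion} to the pair $Y_1, Y_2$, which gives
$$\Rank(Y_1 \cup Y_2) + \Rank(Y_1 \cap Y_2) \le \Rank(Y_1) + \Rank(Y_2) = 2\Rank(X),$$
where the last equality uses the hypothesis $\Rank(Y_1) = \Rank(Y_2) = \Rank(X)$. Since $X \subset Y_1$ and $X \subset Y_2$, we have $X \subset Y_1 \cap Y_2$, so monotonicity gives $\Rank(Y_1 \cap Y_2) \ge \Rank(X)$. Substituting this lower bound and rearranging yields $\Rank(Y_1 \cup Y_2) \le \Rank(X)$. For the reverse inequality, note $X \subset Y_1 \cup Y_2$, so monotonicity again gives $\Rank(Y_1 \cup Y_2) \ge \Rank(X)$. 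Combining the two bounds gives $\Rank(Y_1 \cup Y_2) = \Rank(X)$.

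There is essentially no obstacle here; the only point requiring a little care is applying the submodular inequality "in the right direction" — that is, bounding $\Rank(Y_1 \cup Y_2)$ from above by throwing away the nonnegative slack coming from the fact that $\Rank(Y_1 \cap Y_2)$ is at least $\Rank(X)$. If one prefers a presentation that does not invoke monotonicity as a named fact, one can instead observe directly that $\Rank(X) \le \Rank(Y_1 \cap Y_2) \le \Rank(Y_1) = \Rank(X)$ (the middle step because an independent subset of $Y_1 \cap Y_2$ is an independent subset of $Y_1$), which pins down $\Rank(Y_1 \cap Y_2) = \Rank(X)$, and similarly $\Rank(X) \le \Rank(Y_1 \cup Y_2)$ since an independent subset of $X$ lies in $Y_1 \cup Y_2$; then Theorem \ref{rankunion} closes the loop as above.
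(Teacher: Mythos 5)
Your proof is correct and follows essentially the same route as the paper: apply the submodular inequality of Theorem \ref{rankunion} to $Y_1, Y_2$, use monotonicity to get $\Rank(Y_1\cap Y_2)\ge\Rank(X)$, and conclude $\Rank(Y_1\cup Y_2)\le\Rank(X)$, with the reverse inequality coming from $X\subset Y_1\cup Y_2$. You merely make explicit the monotonicity steps that the paper leaves implicit.
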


\begin{proof} By Theorem \ref{rankunion}, we have

$$ \Rank (Y_1 \cup Y_2) + \Rank (X) \le \Rank(Y_1 \cup Y_2) + \Rank (Y_1 \cap Y_2) \le \Rank(Y_1) + \Rank(Y_2)
= 2 \Rank (X). $$

\end{proof}

For any set $X \subset E$, let $\{ Y_j \}$ be all the sets containing $X$ with $\Rank (Y_j) = \Rank (X)$.  Using Corollary \ref{rankunion'} repeatedly, we see that $\Rank( \cup_j Y_j) = \Rank (X)$.  We define the closure of $X$ to be this union: $Cl(X) = \cup_j Y_j$.  We summarize this information in the following corollary.

\begin{cor} \label{closure} For any matroid $(E, \mathcal{I})$, for any $X \subset E$, the closure of $X$ obeys the following properties.

\begin{itemize}

\item $X \subset Cl(X)$.

\item $\Rank(X) = \Rank (Cl(X))$.

\item If $X \subset Y$ and $\Rank(Y) = \Rank(X)$, then $Y \subset Cl(X)$.

\end{itemize}

\end{cor}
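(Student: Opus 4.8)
The plan is to read off all three properties directly from the definition $Cl(X) = \cup_j Y_j$, where $\{Y_j\}$ enumerates all subsets of $E$ that contain $X$ and satisfy $\Rank(Y_j) = \Rank(X)$.

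The first and third bullets are immediate. Since $X \subset X$ and $\Rank(X) = \Rank(X)$, the set $X$ is itself one of the $Y_j$, so $X \subset \cup_j Y_j = Cl(X)$. Likewise, if $X \subset Y$ and $\Rank(Y) = \Rank(X)$, then $Y$ meets the defining conditions of the family, so $Y = Y_{j_0}$ for some index $j_0$, and hence $Y \subset Cl(X)$.

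For the second bullet, $\Rank(X) = \Rank(Cl(X))$, I would argue by induction on the number of sets $Y_j$ --- which is legitimate precisely because $E$ is finite, so there are only finitely many subsets of $E$ and in particular finitely many candidates $Y_j$. Writing $Z_k = Y_1 \cup \cdots \cup Y_k$, the inductive hypothesis gives $\Rank(Z_k) = \Rank(X)$; since also $X \subset Z_k$, $X \subset Y_{k+1}$, and $\Rank(Y_{k+1}) = \Rank(X)$, Corollary \ref{rankunion'} yields $\Rank(Z_{k+1}) = \Rank(Z_k \cup Y_{k+1}) = \Rank(X)$. Taking $k$ up to the total number of sets $Y_j$ gives $\Rank(Cl(X)) = \Rank(X)$.

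The only point needing any care --- and thus the main, rather mild, obstacle --- is the passage from the two-set statement of Corollary \ref{rankunion'} to the a priori large union defining $Cl(X)$; this is exactly where finiteness of the ground set enters, converting the iteration into an ordinary finite induction. Everything else is a direct unwinding of definitions. As a sanity check, note that $\Rank(Cl(X)) \ge \Rank(X)$ already follows for free from $X \subset Cl(X)$ together with Axiom 2, so it is only the reverse inequality that actually needs the induction.
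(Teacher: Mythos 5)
Your proposal is correct and matches the paper's argument: the paper likewise obtains the first and third bullets directly from the definition $Cl(X)=\cup_j Y_j$ and establishes $\Rank(Cl(X))=\Rank(X)$ by applying Corollary \ref{rankunion'} repeatedly over the finitely many sets $Y_j$. You have simply made the "repeatedly" explicit as a finite induction, which is exactly what the paper intends.
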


(If $X$ is a set in $\RR^n$ with the infinite matroid of affinely independent sets, then $Cl(X)$ is the affine span
of $X$.)

A set $X$ is called a flat if $Cl(X) = X$.  The closure of any set is a flat, by the following lemma.

\begin{lemma} For any set $X$, $Cl(Cl(X)) = Cl(X)$.
\end{lemma}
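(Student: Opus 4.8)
The plan is to show that $Cl(X)$ is a flat by verifying the defining equation $Cl(Cl(X)) = Cl(X)$ directly from the properties listed in Corollary \ref{closure}. First I would apply the first bullet of Corollary \ref{closure} with $Cl(X)$ in place of $X$: this gives the inclusion $Cl(X) \subset Cl(Cl(X))$, so one direction is immediate and requires nothing further.

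For the reverse inclusion, I would use the third bullet of Corollary \ref{closure}. Set $Y := Cl(Cl(X))$. By the second bullet of Corollary \ref{closure} (applied to $Cl(X)$), we have $\Rank(Cl(Cl(X))) = \Rank(Cl(X))$, and by the second bullet applied to $X$ itself, $\Rank(Cl(X)) = \Rank(X)$. Chaining these, $\Rank(Y) = \Rank(X)$. Also $X \subset Cl(X) \subset Cl(Cl(X)) = Y$, so $X \subset Y$ and $\Rank(Y) = \Rank(X)$. The third bullet of Corollary \ref{closure} then yields $Y \subset Cl(X)$, i.e. $Cl(Cl(X)) \subset Cl(X)$.

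Combining the two inclusions gives $Cl(Cl(X)) = Cl(X)$, which is exactly the statement that $Cl(X)$ is a flat. There is essentially no obstacle here: the lemma is a formal consequence of the three bulleted properties of closure, and the only mild point to be careful about is invoking those properties with the correct set substituted in (sometimes $X$, sometimes $Cl(X)$) and tracking the rank equalities through the chain. No appeal to Axiom 3, Theorem \ref{rankunion}, or finiteness of $E$ is needed beyond what is already packaged into Corollary \ref{closure}.
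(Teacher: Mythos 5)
Your proof is correct and is essentially identical to the paper's: both establish $X \subset Cl(X) \subset Cl(Cl(X))$, chain the rank equalities $\Rank(Cl(Cl(X))) = \Rank(Cl(X)) = \Rank(X)$, and then invoke the third property of Corollary \ref{closure} to get $Cl(Cl(X)) \subset Cl(X)$. No differences worth noting.
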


\begin{proof} Clearly $X \subset Cl(X) \subset Cl(Cl(X))$.  On the other hand, we know that $\Rank Cl( Cl(X)) =
\Rank Cl(X) = \Rank X$.  By Corollary \ref{closure}, $Cl(Cl(X)) \subset Cl(X)$.   \end{proof}

Earlier, we discussed sets of a given rank that are maximal with respect to inclusion.  A flat is exactly such a set.

\begin{lemma} \label{flatmax} A set $X$ is a flat if and only if $X$ is a set of rank $\Rank(X)$ which is maximal with respect to inclusion.
\end{lemma}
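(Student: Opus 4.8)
The statement is a biconditional, and both directions fall out immediately from the three bullets of Corollary~\ref{closure}, so the plan is simply to unwind the definition of flat against that corollary.

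For the forward direction, I would assume $X$ is a flat, i.e.\ $Cl(X) = X$. Let $Y$ be any set with $X \subseteq Y$ and $\Rank(Y) = \Rank(X)$. The third bullet of Corollary~\ref{closure} then gives $Y \subseteq Cl(X) = X$, hence $Y = X$. This says precisely that among all sets of rank $\Rank(X)$ containing $X$, the set $X$ itself is the only one, so $X$ is maximal with respect to inclusion in the class of sets of rank $\Rank(X)$.

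For the reverse direction, I would assume $X$ is maximal with respect to inclusion among sets of rank $\Rank(X)$. By the first two bullets of Corollary~\ref{closure}, we have $X \subseteq Cl(X)$ and $\Rank(Cl(X)) = \Rank(X)$, so $Cl(X)$ is a set of rank $\Rank(X)$ containing $X$; maximality forces $Cl(X) = X$, which is the definition of a flat.

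There is really no main obstacle here: the substantive work was already carried out in proving Theorem~\ref{rankunion}, Corollary~\ref{rankunion'}, and Corollary~\ref{closure}, which together establish that the closure operator is well defined and rank-preserving. The only point worth stating carefully is the reading of ``maximal with respect to inclusion'' as maximal \emph{within the family of sets of that fixed rank}, which is exactly the quantifier appearing in the third bullet of Corollary~\ref{closure}.
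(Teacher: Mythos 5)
Your proposal is correct and follows exactly the paper's argument: both directions are deduced directly from Corollary~\ref{closure}, with the forward direction using the third bullet to force any rank-preserving superset into $Cl(X)=X$, and the reverse direction using the first two bullets plus maximality to conclude $Cl(X)=X$. No differences worth noting.
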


\begin{proof} Suppose that $X$ is a flat.  In other words, $Cl(X) = X$.  Let $X$ be a proper subset of $X'$.  We have to show that
$\Rank (X') > \Rank (X)$.  But if $\Rank(X') = \Rank (X)$, then Corollary \ref{closure} implies that $X' \subset Cl(X) = X$.

Now suppose that $X$ is a maximal set of rank $\Rank(X)$.  We have $X \subset Cl(X)$ and $\Rank(Cl(X)) = \Rank (X)$.  By maximality,
we must have $Cl(X) = X$.  Then $X$ is a flat. \end{proof}

Recall that we defined an affine
$k$-dimensional flat of a matroid to be a maximal set of rank $k+1$.  By Lemma \ref{flatmax}, an ``affine $k$-dimensional flat" is just a flat of rank $k+1$.  In particular, a point is a rank 1 flat, a line is a rank 2 flat, and a plane is a rank 3 flat.  (Technical point.  The empty set is
a flat by our definition.  In a simple matroid, the empty set is the unique flat of rank 0.)  The language of flats and closures
will be useful for understanding lines and planes in a matroid.

\begin{lemma} \label{uniquespan} For any $k$, a set of rank $k$ is contained in a unique rank $k$ flat.
\end{lemma}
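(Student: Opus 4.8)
The plan is to use the closure operator and its basic properties (Corollary \ref{closure} and the flatness of closures) together with Corollary \ref{rankunion'}. Let $X$ be a set of rank $k$. First I would establish existence: $Cl(X)$ is a flat containing $X$, and $\Rank(Cl(X)) = \Rank(X) = k$ by Corollary \ref{closure}, so $X$ is contained in \emph{some} rank $k$ flat, namely $Cl(X)$.

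For uniqueness, suppose $F$ is any rank $k$ flat with $X \subset F$. The key step is to show $F = Cl(X)$. Since $X \subset F$ and $\Rank(F) = k = \Rank(X)$, the third bullet of Corollary \ref{closure} gives $F \subset Cl(X)$. Conversely, since $F$ is a flat, $Cl(F) = F$; but $X \subset F$ forces $Cl(X) \subset Cl(F) = F$ (monotonicity of closure, which follows from the third bullet of Corollary \ref{closure} applied after noting $\Rank(Cl(X)) = \Rank(X) \le \Rank(F)$ — more directly, any rank-$k$ set $Y$ with $X \subset Y$ and $\Rank(Y)=k$ lies in $Cl(X)$, and one checks $Cl(X)$ itself has rank $k$ and contains $X$, hence $Cl(X) \subset F$ would need a small argument). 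To keep this clean I would instead argue both inclusions via Corollary \ref{closure}: from $X \subset F$, $\Rank X = \Rank F$ we get $F \subset Cl(X)$; and since $Cl(X)$ is a flat of rank $k$ with $X \subset Cl(X) \subset F$ impossible unless... — more simply, $Cl(X) \subset F$ because $F$ is a flat containing $X$ and closure is the smallest such, which we can see from $Cl(Cl(X)) = Cl(X)$ and $X\subset F$ giving $Cl(X) \subset Cl(F) = F$ once monotonicity is noted.

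The only mildly delicate point, and the one I would state carefully, is \emph{monotonicity of closure}: if $X \subset Y$ then $Cl(X) \subset Cl(Y)$. This is not literally one of the three bullets of Corollary \ref{closure}, but it follows quickly: $Cl(X)$ has rank $\Rank(X)$, and $X \subset Cl(X)$; applying Corollary \ref{rankunion'} to $X \subset Cl(X)$ and $X \subset Y$... actually the cleanest route is: $Cl(Y)$ is a flat containing $Y \supset X$, with $\Rank(Cl(Y)) \ge \Rank(X)$; by Corollary \ref{rankunion'} with the common subset $X$, $\Rank(Cl(X) \cup Cl(Y))$ need not equal $\Rank(X)$ when ranks differ, so one reduces to the equal-rank case or argues directly that any rank-$\Rank(X)$ superset of $X$ inside $Cl(Y)$ — I expect the intended short proof simply invokes that $Cl(X)$ is the union of all equal-rank supersets of $X$ and that any flat $F \supseteq X$ of rank $k$ is itself one such superset, giving $F \subseteq Cl(X)$, while $F \supseteq Cl(X)$ follows because every equal-rank superset of $X$ has rank $k = \Rank(F)$ and lies in $Cl(F) = F$ by the third bullet of Corollary \ref{closure}. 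Assembling: $F \subseteq Cl(X)$ and $Cl(X) \subseteq F$, hence $F = Cl(X)$, proving uniqueness. The main obstacle is purely expository — choosing the phrasing of the closure's minimality so that it follows transparently from Corollary \ref{closure} without re-deriving monotonicity from scratch.
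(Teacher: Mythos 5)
Your approach is the same as the paper's: existence via $Cl(X)$, and uniqueness by squeezing any rank-$k$ flat $F \supseteq X$ between the two inclusions $F \subseteq Cl(X)$ and $Cl(X) \subseteq F$, both extracted from Corollary \ref{closure}. The first inclusion you get cleanly. The second is where your write-up goes in circles, and the final justification you settle on --- that ``every equal-rank superset of $X$ lies in $Cl(F) = F$ by the third bullet'' --- does not literally follow: that bullet would require the superset to contain $F$, not merely $X$. The fix is the one-line move the paper makes: you already know $F \subseteq Cl(X)$ with $\Rank(F) = \Rank(Cl(X)) = k$, so apply the third bullet of Corollary \ref{closure} a second time, now \emph{with $F$ as the base set} and $Cl(X)$ as the equal-rank superset, to get $Cl(X) \subseteq Cl(F) = F$. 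No monotonicity of closure is needed (Lemma \ref{closincl} is only proved afterwards in the paper, so it is better not to lean on it here). With that one substitution your argument is complete and essentially identical to the paper's.
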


\begin{proof} Suppose $X$ has rank $k$.  Then $Cl(X)$ is a flat of rank $k$.  Suppose that $X \subset F$
a flat of rank $k$.  Since $\Rank F = k$, $F \subset Cl(X)$.  Since $F$ has rank $k$ and $Cl(X)$ has rank $k$, we
have $Cl(X) \subset Cl(F) = F$.  Hence $F = Cl(X)$.  \end{proof}

\begin{lemma} \label{closincl} If $X \subset Y$, then $Cl(X) \subset Cl(Y)$.
\end{lemma}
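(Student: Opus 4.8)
The plan is to reduce the statement to a single rank identity and then quote Corollary \ref{closure}. Concretely, the goal is to show that $\Rank(Cl(X) \cup Cl(Y)) = \Rank(Cl(Y))$. Once this is in hand, Corollary \ref{closure} applies with $Cl(Y)$ playing the role of ``$X$'' and $Cl(X) \cup Cl(Y)$ playing the role of ``$Y$'': since $Cl(Y) \subset Cl(X) \cup Cl(Y)$ and these two sets have equal rank, we get $Cl(X) \cup Cl(Y) \subset Cl(Cl(Y))$. Using the lemma that the closure of a set is a flat (i.e.\ $Cl$ is idempotent), $Cl(Cl(Y)) = Cl(Y)$, so $Cl(X) \cup Cl(Y) \subset Cl(Y)$, and in particular $Cl(X) \subset Cl(Y)$, as desired.

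To establish the rank identity, I would first pin down the rank of the intersection. Since $X \subset Cl(X)$ and $X \subset Y \subset Cl(Y)$, we have $X \subset Cl(X) \cap Cl(Y)$; and $Cl(X) \cap Cl(Y) \subset Cl(X)$. Monotonicity of rank (immediate from Axiom 2) then squeezes
$$\Rank(X) \le \Rank(Cl(X) \cap Cl(Y)) \le \Rank(Cl(X)) = \Rank(X),$$
so $\Rank(Cl(X) \cap Cl(Y)) = \Rank(X)$. Now apply Theorem \ref{rankunion} to the pair $Cl(X), Cl(Y)$:
$$\Rank(Cl(X) \cup Cl(Y)) \le \Rank(Cl(X)) + \Rank(Cl(Y)) - \Rank(Cl(X) \cap Cl(Y)) = \Rank(X) + \Rank(Y) - \Rank(X) = \Rank(Y).$$
Conversely $Cl(Y) \subset Cl(X) \cup Cl(Y)$ forces $\Rank(Cl(X) \cup Cl(Y)) \ge \Rank(Cl(Y)) = \Rank(Y)$, so both ranks equal $\Rank(Y)$, which is exactly the identity needed above.

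I do not expect a genuine obstacle: the argument is just one application of Theorem \ref{rankunion} and one application of Corollary \ref{closure}, glued together by the observation that $X$ lies inside both $Cl(X)$ and $Cl(Y)$. The only point requiring a little care is bookkeeping about which earlier result is being invoked and in which role --- in particular that $Cl(Y)$ is itself a flat, so that the third bullet of Corollary \ref{closure} legitimately applies with $Cl(Y)$ in the position of ``$X$''. An essentially equivalent alternative would be to prove the more symmetric statement that $Cl(X)$ is contained in \emph{every} flat $F$ with $X \subset F$ (if some $e \in Cl(X) \setminus F$ existed, maximality of the flat $F$ would give $\Rank(F \cup \{e\}) = \Rank(F) + 1$, while Theorem \ref{rankunion} applied to $Cl(X)$ and $F$ bounds $\Rank(Cl(X) \cup F) \le \Rank(F)$, a contradiction) and then take $F = Cl(Y)$; I would present whichever version reads more cleanly, though the first avoids the proof by contradiction.
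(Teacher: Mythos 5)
Your proof is correct and follows essentially the same route as the paper: apply Theorem \ref{rankunion} with $Cl(X)$ as one of the two sets, use $X \subset Cl(X) \cap (\cdot)$ to cancel $\Rank(X)$, and then invoke the third bullet of Corollary \ref{closure}. The only (cosmetic) difference is that you take the union with $Cl(Y)$ rather than with $Y$ itself, which obliges you to also quote the idempotency lemma $Cl(Cl(Y)) = Cl(Y)$; the paper works with $Cl(X) \cup Y$ and lands directly inside $Cl(Y)$ without that extra step.
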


\begin{proof} Clearly $Y \subset Cl(X) \cup Y$.  We will check that $\Rank (Cl(X) \cup Y) = \Rank (Y)$.  Then
by Corollary \ref{closure}, it follows that $Cl(X) \cup Y \subset Cl(Y)$.  In particular, this will show that $Cl(X)
\subset Cl(Y)$.  Clearly $\Rank (Cl(X) \cup Y) \ge \Rank(Y)$.  So it only remains to check that $\Rank (Cl(X) \cup Y)
\le \Rank (Y)$.  To check this, we use Theorem \ref{rankunion}.

$$ \Rank ( Cl(X) \cup Y) + \Rank(X) \le \Rank( Cl(X) \cup Y) + \Rank (Cl(X) \cap Y) \le $$

$$ \le \Rank( Cl(X)) + \Rank (Y) = \Rank(X) + \Rank(Y). $$

Subtracting $\Rank(X)$ from both sides gives the estimate. \end{proof}

We now prove that the intersection of two flats is a flat, as for flats in $\RR^n$.

\begin{theorem} \label{flatint} If $F_1$ and $F_2$ are flats in a matroid, then $F_1 \cap F_2$ is also a flat.
\end{theorem}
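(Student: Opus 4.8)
The plan is to show directly that $F_1 \cap F_2$ equals its own closure, which is exactly the definition of a flat. Write $F := F_1 \cap F_2$. Since $X \subseteq Cl(X)$ for every set $X$ (the first bullet of Corollary \ref{closure}), we automatically have $F \subseteq Cl(F)$, so the entire content of the statement is the reverse inclusion $Cl(F) \subseteq F$.

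To obtain this, I would invoke monotonicity of closure, Lemma \ref{closincl}. Since $F \subseteq F_1$, that lemma gives $Cl(F) \subseteq Cl(F_1)$; and because $F_1$ is a flat, $Cl(F_1) = F_1$, so $Cl(F) \subseteq F_1$. The identical argument with $F_2$ in place of $F_1$ gives $Cl(F) \subseteq F_2$. Combining the two, $Cl(F) \subseteq F_1 \cap F_2 = F$, which is what we wanted. Hence $Cl(F) = F$, so $F$ is a flat.

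The argument is short and I do not anticipate a genuine obstacle; the only point to check is the degenerate case $F = \emptyset$, but the empty set is a flat by our definition (in a simple matroid it is the unique rank-$0$ flat), so nothing special is needed there. It is worth remarking that the only properties of flats used are the fixed-point characterization $Cl(X) = X$ and the monotonicity recorded in Lemma \ref{closincl}; the same proof shows, by iteration, that an arbitrary (finite) intersection of flats is again a flat, which matches the behavior of affine subspaces in $\RR^n$.
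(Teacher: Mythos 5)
Your proof is correct and is essentially identical to the paper's: both reduce the claim to $Cl(F_1 \cap F_2) \subseteq F_1 \cap F_2$ and obtain it from the monotonicity of closure (Lemma \ref{closincl}) together with $Cl(F_i) = F_i$. Nothing further is needed.
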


\begin{proof} To prove that $F_1 \cap F_2$ is a flat, we have to check that $Cl(F_1 \cap F_2) = F_1 \cap F_2$.
Now for any set $X$, $X \subset Cl(X)$, so we just have to show that $Cl(F_1 \cap F_2) \subset F_1 \cap F_2$.

Clearly $F_1 \cap F_2 \subset F_1$.  By Lemma \ref{closincl}, $Cl(F_1 \cap F_2) \subset Cl(F_1) = F_1$.
Similarly, $Cl(F_1 \cap F_2) \subset Cl(F_2) = F_2$.  Therefore, $Cl(F_1 \cap F_2) \subset F_1 \cap F_2$. \end{proof}

Here is another simple fact about flats.

\begin{prop} \label{flatinflat} Suppose that $F_1 \subset F_2$ are flats in a matroid.  Then either $F_1 = F_2$ or $\Rank(F_1) <
\Rank(F_2)$.
\end{prop}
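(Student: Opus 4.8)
The plan is to reduce the statement directly to the characterization of flats as maximal sets of their own rank, which is exactly Lemma \ref{flatmax}. The starting observation is the trivial monotonicity of rank: since $F_1 \subseteq F_2$, every independent subset of $F_1$ is also an independent subset of $F_2$, so $\Rank(F_1) \le \Rank(F_2)$. This is the only inequality ingredient, and it is immediate from the definition of rank. It therefore suffices to rule out the possibility that $F_1$ is a proper subset of $F_2$ while $\Rank(F_1) = \Rank(F_2)$.

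Here is where I would invoke the earlier machinery. By Lemma \ref{flatmax}, the hypothesis that $F_1$ is a flat means precisely that $F_1$ is maximal with respect to inclusion among all sets of rank $\Rank(F_1)$. If we had $F_1 \subsetneq F_2$ together with $\Rank(F_2) = \Rank(F_1)$, then $F_2$ would be a strictly larger set of the same rank as $F_1$, contradicting this maximality. Hence, whenever $F_1 \neq F_2$, we must have the strict inequality $\Rank(F_1) < \Rank(F_2)$, which is the claim.

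An essentially equivalent route bypasses Lemma \ref{flatmax} and uses Corollary \ref{closure} instead: assuming $\Rank(F_2) = \Rank(F_1)$, the third bullet of Corollary \ref{closure} applied with $X = F_1$ and $Y = F_2$ yields $F_2 \subseteq Cl(F_1)$; since $F_1$ is a flat, $Cl(F_1) = F_1$, so $F_2 \subseteq F_1$ and thus $F_1 = F_2$. Either way the whole argument is two lines. I do not expect any genuine obstacle; the only point requiring a little care is not to skip the trivial monotonicity $\Rank(F_1) \le \Rank(F_2)$, which is what makes the correct dichotomy ``$F_1 = F_2$ or $\Rank(F_1) < \Rank(F_2)$'' rather than merely ``$F_1 = F_2$ or $\Rank(F_1) \neq \Rank(F_2)$.''
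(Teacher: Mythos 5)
Your proposal is correct and matches the paper's proof: the paper's argument is exactly your second route (if $\Rank(F_1)=\Rank(F_2)$ then Corollary \ref{closure} gives $F_2 \subset Cl(F_1)=F_1$), and your primary route via Lemma \ref{flatmax} is just a rephrasing of the same idea. Nothing is missing.
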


\begin{proof} Suppose $\Rank (F_1) = \Rank (F_2)$.  Since $F_1 \subset F_2$, we have $F_2 \subset Cl(F_1) =
F_1$.  \end{proof}

Now we have enough background knowledge to quickly prove Propositon \ref{incidlp}.

\begin{proof} (1) Let $X$ be a set of 2 points.  In a simple matroid, any set of 2 points is independent, so $Rank(X) = 2$.  Therefore $Cl(X)$ is a rank 2 flat, which is a line.  Now let $l_1$ and $l_2$ be two lines containing $X$.
 The rank of $l_1 \cap l_2$ is at least the rank of $X$ which is 2 and at most the rank of
$l_1$ which is 2.  By Theorem \ref{flatint}, $l_1 \cap l_2$ is a flat containing $X$.  In short, $l_1 \cap l_2$ is a rank 2
flat.  Since $l_1 \cap l_2 \subset l_1$, Proposition \ref{flatinflat} implies that $l_1 \cap l_2 = l_1$.  Similarly,
$l_1 \cap l_2 = l_2$.  Therefore, $l_1 = l_2$.  This shows that $X$ is contained in a unique line.

(2) Let $X$ be a set of 3 points not contained in any line.  If $X$ had rank $2$, then $X$ would be contained in a maximal rank 2 set, which is a line.
Therefore, $X$ has rank 3.  By Lemma \ref{uniquespan}, $X$ lies in a unique rank 3 flat.  In other words, $X$ lies in a unique plane.

(3) Let $l$ be a line and let $\pi$ be a plane in a simple matroid, and suppose that $l \cap \pi$ contains at least two points.
By Theorem \ref{flatint}, we know that $l \cap \pi$ is a flat.  Since the matroid is simple,
the rank of $l \cap \pi$ is at least 2.  On the other hand, $l \cap \pi \subset l$, so it has rank $\le 2$.  In short $l \cap \pi$ is a rank 2
flat.   So $l \cap \pi \subset l$ are both rank 2 flats.  By Proposition \ref{flatinflat}, $l \cap \pi = l$.  Hence $l \subset \pi$.

(4) Let $l_1$ and $l_2$ be two lines in a simple matroid that intersect at a point $p$.  Since $l_2$ has rank 2, it must contain some point $p_2 \not= p$.
By (1) above, $l_1 \cap l_2$ consists of $\le 1$ point, and so $p_2 \notin l_1$.  Similarly, we can find a point $p_1 \in l_1 \setminus l_2$.  We claim that
the three points $p_1, p_2, p$ do not all lie in a line.  They don't all lie in $l_1$.  Any other line intersects $l_1$ in at most one point, so no other line
contains both $p$ and $p_1$.  By (2) above, $p, p_1, p_2$ lie in a unique plane.  By (3), $l_1$ and $l_2$ also lie in this plane.

\end{proof}

We have now covered all the results about matroids that we will use in the sequel, and hopefully given a little flavor for matroids.

\section{Upper bound on the number of joints} \label{upperboundsec}

In this section we prove Theorem \ref{upperbound}.  The main tool of the proof is the following theorem of Ruzsa and Szemeredi \cite{ruzsa}, which is known in the literature as the triangle removal lemma (see also \cite{fox}).

\begin{lemma}
\label{triangle}
Let $G$ be a graph with vertex set $V$.  If $G$ contains $\epsilon |V|^2$ edge-disjoint triangles, then $G$ contains at least $\delta |V|^3$ triangles, where $\delta$ depends only on $\epsilon$.
\end{lemma}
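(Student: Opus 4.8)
The plan is to deduce the lemma from the Szemer\'edi regularity lemma together with the standard triangle counting lemma; this is the classical route to the Ruzsa--Szemer\'edi removal lemma. Write $n = |V|$ and suppose $G$ contains a family $\mathcal{T}$ of $\epsilon n^2$ edge-disjoint triangles. The first step is to apply the regularity lemma, with a regularity parameter $\eta$ and a density threshold $d$ to be calibrated below, obtaining an equitable partition $V = V_1 \cup \cdots \cup V_k$ into parts of size $\sim n/k$, with $k$ bounded above by a constant $K(\eta)$ depending only on $\eta$, and such that all but at most $\eta k^2$ of the pairs $(V_i, V_j)$ are $\eta$-regular.

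The second step is a cleaning procedure: I would delete every edge of $G$ that (a) lies inside a single part $V_i$, (b) joins a pair $(V_i, V_j)$ that is not $\eta$-regular, or (c) joins a pair $(V_i, V_j)$ whose edge density is below $d$. A routine count bounds the number of deleted edges by roughly $(1/k + \eta + d)n^2$: type (a) contributes at most $k \cdot (n/k)^2 = n^2/k$, type (b) at most $\eta k^2 \cdot (n/k)^2 = \eta n^2$, and type (c) at most $d n^2$. I would choose $\eta$ and $d$ small and force the number of parts $k$ to be large (the regularity lemma permits prescribing a lower bound on $k$), so that this total is strictly less than $\epsilon n^2$. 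Since the triangles of $\mathcal{T}$ are edge-disjoint, each deleted edge destroys at most one of them, so deleting fewer than $\epsilon n^2$ edges leaves at least one triangle of $\mathcal{T}$ intact in the cleaned graph $G'$.

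The third step locates and exploits this surviving triangle. Its three vertices lie in three distinct parts $V_a, V_b, V_c$ (edges inside a part were deleted), and the three pairs among them are all $\eta$-regular of density at least $d$ (edges of types (b) and (c) were deleted). Here the main work is the counting lemma. Taking $B' = V_b$ in the regularity condition for $(V_a, V_b)$, and similarly for $(V_a, V_c)$, shows that all but at most $2\eta|V_a|$ vertices $a \in V_a$ satisfy $|N(a) \cap V_b| \ge (d-\eta)|V_b|$ and $|N(a) \cap V_c| \ge (d-\eta)|V_c|$. For each such good vertex $a$, provided $d \ge 2\eta$ these two neighborhoods are large enough to invoke $\eta$-regularity of $(V_b, V_c)$, which forces edge density at least $d - \eta$ between them and hence at least $(d-\eta)^3|V_b||V_c|$ triangles through $a$. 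Summing over good vertices yields at least $(1-2\eta)(d-\eta)^3 |V_a||V_b||V_c|$ triangles across these three parts, which is at least $\delta n^3$ for a constant $\delta = \delta(\epsilon)$ fixed by the choices of $\eta$, $d$, and the bound $k \le K(\eta)$.

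I expect the counting lemma in the third step to be the main obstacle, since it is the only place where $\eta$-regularity is used quantitatively; the first two steps are a direct application of the regularity lemma and an elementary edge count. A secondary subtlety is the calibration: $\eta$ must be taken small compared with $d$ (so that $d - \eta$ stays bounded below and the neighborhoods remain admissible for regularity), while both $\eta$ and $d$ are taken small and $k$ large relative to $\epsilon$, so that the cleaning removes fewer than $\epsilon n^2$ edges yet still leaves a dense regular triple.
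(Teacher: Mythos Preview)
The paper does not actually prove this lemma: it is stated as a known theorem of Ruzsa and Szemer\'edi and cited to \cite{ruzsa} (with a pointer to \cite{fox}), and then used as a black box in the proof of Theorem~\ref{upperbound}. Your argument is the standard regularity-plus-counting proof of the triangle removal lemma and is correct as written; the calibration you describe (take $\eta$ small relative to $d$, and $d$, $\eta$, $1/k$ all small relative to $\epsilon$) is exactly what is needed, and the counting lemma step is handled properly.
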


We will also use the properties of lines and planes in a simple matroid given in Proposition \ref{incidlp}.

\vskip5pt

\noindent \emph{Proof of Theorem \ref{upperbound}.}  Let $\epsilon > 0$ and $M  = (E,\mathcal{I})$ be a simple matroid with $L \geq L_0$ lines, where $L_0 = L_0(\epsilon)$ will be determined later.  Let $\frak L$ be a set of lines in $M$.  For the sake of contradiction, suppose that $\frak L$ determines more than  $\epsilon L^2$ joints in $M$.

As long as there is a plane $h$ containing $L_h \geq 2/\epsilon$ lines, we remove from $\frak L$ the $L_h$ lines.  By Proposition \ref{incidlp}, we know that each line not contained in $h$ meets $h$ in at most one point, which implies that $h$ contains at most $L - L_h \leq L$ joints.  Therefore removing all lines contained in $h$ removes at most $L$ joints.  The number of planes $h$ considered is at most $\epsilon L/2$, which implies that at most $\epsilon L^2/2$ joints are removed in this process.  Let $\frak L'$ be the set of remaining lines, which forms at least $\epsilon L^2/2$ joints.  No plane contains
$> 2 / \eps$ lines of $\frak L'$.

For each $x \in E$, let $d(x)$ denote the number of lines in $\frak L'$ that contain $x$.  Then we define

$$E_1 = \{x \in E: d(x)\geq 4/\epsilon\}\hspace{.5cm}\textnormal{and}\hspace{.5cm}E_2=\{x \in E: 3\leq d(x) < 4/\epsilon\}.$$

\noindent  By Proposition \ref{incidlp}, every pair of lines in $\frak L'$ have at most one point in common, and so we have

$$|E_1|\frac{4}{\epsilon}\leq \sum\limits_{x\in E_1}d(x) \leq \sum\limits_{x \in E_1\cup E_2} d(x) \leq |\frak L'|^2\leq  L^2.$$

\noindent  Hence $|E_1|\leq \epsilon L^2/4$ and therefore $|E_2| \geq \epsilon L^2/4$.  (By a similar argument, $|E_2| \le
L^2$.)

Now we define that graph $G$ whose vertex set is $\frak L'$ and two vertices are adjacent in $G$ if and only if the corresponding lines intersect at a point from $E_2$.  Note that $G$ has at most $L$ vertices.  Since each point in $E_2$ is a joint, this implies that $G$ contains at least $\epsilon L^2/4$ edge-disjoint triangles.  By Lemma \ref{triangle}, $G$ contains at least $\delta L^3$ triangles, where $\delta$ depends only on $\epsilon$.

We say that $l_1,l_2,l_3 \in \frak L'$ form a \emph{degenerate} triangle in $G$, if there exists a point $x\in E_2$ such that $l_1\cap l_2\cap l_3 = x$.  Since $d(x) \leq 4/\epsilon $ for every $x \in E_2$, the number of degenerate triples in $G$ is at most

$${4/\epsilon \choose 3}|E_2| \leq 4^3 \eps^{-3} L^2.$$

\noindent For $L$ sufficiently large we have $4^3 \eps^{-3} L^2 < (\delta/2)L^3$, and therefore $G$ contains at least $\delta L^3/2$ non-degenerate triangles.  Notice that if $l_1,l_2,l_3$ forms a non-degenerate triangle in $G$, then there are distinct points $x_1,x_2,x_3 \in E_2$ such that $l_1\cap l_2 = x_1, l_2\cap l_3 = x_2$, and $l_1\cap l_3 = x_3$.

Since $G$ contains at least $\delta L^3/2$ non-degenerate triangles, we can choose two lines $l_1,l_2\in \frak L'$ that participate in at least $\delta L/2$ non-degenerate triangles.  In other words, there are $\delta L / 2$ lines $l \in \frak L '$ so that $l_1, l_2, l$ form a non-degenerate triangle.  In order to participate in a non-degenerate triangle, $l_1$ and $l_2$
must intersect in a point of $E_2 \subset E$.
Now by Proposition \ref{incidlp}, $l_1 \cup l_2$ lies in a unique plane $\pi$.  Suppose that $l_1, l_2, l$ form a non-degenerate triangle.  Then $l$ must intersect
$l_1 \cup l_2$ at two distinct points.  So $l$ intersects $\pi$ at two distinct points.  By Proposition \ref{incidlp}, $l$ lies in the plane $\pi$.  Therefore,
$\pi$ contains $\ge \delta L / 2 + 2$ lines of $\frak L'$.   For sufficiently large $L > L_0(\epsilon) \geq 100/(\epsilon \delta)$ we have

$$\frac{\delta}{2}L + 2 > \frac{2}{\epsilon},$$

\noindent which is a contradiction since no plane contains more than $2/\epsilon$ lines from $\frak L'$.  This completes the proof of Theorem \ref{upperbound}. $\hfill\square$

\section{Constructing matroids where $L$ lines can make $L^{2-\epsilon}$ joints}
\label{construction}
The construction of our matroids will be based on configurations of lines and points in $\RR^n$.  Suppose that
$E$ is a finite set of points in $\RR^n$ and $\frak L$ is a finite set of lines in $\RR^n$.  A \emph{triangle} in $(E, \frak L)$ will
mean a set of three distinct points $x_1, x_2, x_3 \in E$ and three distinct lines $l_1, l_2, l_3 \in \frak L$ so that each line $l_i$ contains exactly two of the points $x_j$.  We say that $(E, \frak L)$ is triangle free if there are no triangles in $(E,
\frak L)$.

If $(E, \frak L)$ is triangle free, then we will use $\frak L$ to construct a matroid on the set $E$ with some good properties.

\begin{prop} \label{matroid} Suppose that $(E, \frak L)$ is triangle free and that each line of $\frak L$ contains at least two points of
$E$.  Then there is a simple matroid $M$ on $E$ with the following properties.

\begin{enumerate}

\item For each line $l \in \frak L$,  $l \cap E$ is a line in the matroid $M$.

\item If $x \in E$ and $l_1, l_2, l_3 \in \frak L$ are lines containing $x$, then $E \cap (l_1 \cup l_2 \cup l_3)$ is not contained in any plane of the matroid $M$.

\item The matroid $M$ has rank at most 4.

\end{enumerate}

\end{prop}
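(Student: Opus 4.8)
The plan is to realize the fat lines of $\frak L$ generically in $\RR^3$ and take $M$ to be the resulting affine matroid. Set $\mathcal B := \{\,l\cap E : l\in\frak L,\ |l\cap E|\ge 3\,\}$; since each line of $\frak L$ meets $E$ in at least two points, the map $l\mapsto l\cap E$ is injective and any two members of $\mathcal B$ share at most one point, so $\mathcal B$ is a partial linear space on $E$. I would fix an injective map $\phi:E\to\RR^3$ such that (i) the points $\phi(B)$ are collinear for every $B\in\mathcal B$, and (ii) $\phi$ is otherwise generic, in the sense that an affinely dependent subset of $\phi(E)$ of size at most $4$ occurs only when this dependence is forced by the collinearities in (i), and no point $\phi(q)$ lies on the line through $\phi(B)$ unless $q\in B$. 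Then $M$ is defined to be the affine matroid of the finite set $\phi(E)\subset\RR^3$: it is simple because $\phi$ is injective, and property (3) holds because affine rank in $\RR^3$ is at most $4$.

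Property (1) is immediate. If $|l\cap E|=2$ then $l\cap E$ is a two-point set, hence a line of the simple matroid $M$. If $l\cap E\in\mathcal B$ then $\phi(l\cap E)$ is collinear, so $l\cap E$ has rank $2$, and it is maximal of rank $2$ (a line) since by (ii) no other point of $\phi(E)$ lies on the spanned line.

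The substance is property (2), and this is where triangle-freeness enters. Let $x\in E$ lie on distinct $l_1,l_2,l_3\in\frak L$, and choose $p_i\in(l_i\cap E)\setminus\{x\}$; since distinct lines of $\frak L$ meet in at most one point, $p_i\notin l_j$ for $i\ne j$, so $x,p_1,p_2,p_3$ are distinct. I claim $\{\phi(x),\phi(p_1),\phi(p_2),\phi(p_3)\}$ is affinely independent, which forces $\Rank\bigl(E\cap(l_1\cup l_2\cup l_3)\bigr)=4$, so that set lies in no plane of $M$. By (ii) it suffices to show this four-element set is not forced dependent by $\mathcal B$: that no three of the four points lie on a common member of $\mathcal B$, and that the four are not forced coplanar. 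For the first part, a member of $\mathcal B$ through $\{x,p_i\}$ must equal $l_i\cap E$, hence cannot contain $p_j$ for $j\ne i$; and a member of $\mathcal B$ through $\{p_1,p_2\}$ would avoid $x$ (else it would equal $l_1\cap E$ and contain $p_2$), so together with $l_1,l_2$ it would make $x,p_1,p_2$ a triangle in $(E,\frak L)$, contrary to hypothesis. For the second part, let $C$ be the smallest subset of $E$ containing $\{x,p_1,p_2\}$ that is closed under adjoining $l\cap E$ whenever $l\in\frak L$ contains two points of the current set; any forced coplanarity of the four points would place $p_3$ in $C$. Starting from $\{x,p_1,p_2\}$ one adjoins at most $l_1\cap E$ and $l_2\cap E$, and no further adjunction is possible, since a line $l\in\frak L$ through a point of $(l_1\cap E)\setminus\{x\}$ and a point of $(l_2\cap E)\setminus\{x\}$ would again create a triangle. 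Hence $C\subseteq(l_1\cap E)\cup(l_2\cap E)$, and $p_3\notin C$ because $l_3\cap E$ meets each $l_i\cap E$ only in $\{x\}$. This proves property (2).

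The main obstacle is producing a realization $\phi$ satisfying (ii). Realizability of $\mathcal B$ in $\RR^3$ is free — a generic linear projection $\RR^n\to\RR^3$ applied to $E$ preserves the collinearities (i) — so one hopes to take $\phi$ to be a sufficiently generic point of the realization variety of $\mathcal B$ in $\RR^3$, which works provided each genericity requirement in (ii) can be met by at least one realization. The requirement that a particular quadruple $\phi(x),\phi(p_1),\phi(p_2),\phi(p_3)$ be affinely independent is exactly the ``not forced coplanar'' statement deduced above from triangle-freeness, and the remaining requirements are handled the same way; then the finitely many bad loci are proper subvarieties and a generic $\phi$ avoids them all. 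Alternatively one can build $\phi$ greedily, placing the points of $E$ one at a time in generic position subject to the collinearity constraints with the already-placed points; the only configuration that could obstruct this — a point forced onto two lines each already pinned down by two earlier points and meeting in no earlier point — is ruled out by choosing the order with care, again using a triangle-free argument. I expect the careful formulation of this existence step to be the one delicate point of the proof.
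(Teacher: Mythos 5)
Your approach is genuinely different from the paper's, and unfortunately the step you flag as ``the one delicate point'' is not merely delicate: it is impossible for exactly the configurations the proposition is meant to handle. You propose to realize $(E,\frak L)$ by an injective $\phi: E \to \RR^3$ that keeps each $l \cap E$ collinear but is otherwise generic, and to let $M$ be the affine matroid of $\phi(E)$. If such a $\phi$ existed with your genericity condition (ii), then for every triple intersection point $x$ of $(E,\frak L)$ the three lines of $\RR^3$ spanned by $\phi(l_i\cap E)$ through $\phi(x)$ would be non-coplanar, so $\phi(x)$ would be a genuine joint of the $L$ lines $\mathrm{span}(\phi(l\cap E))$ in $\RR^3$. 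Applied to the Behrend configurations of Proposition \ref{funnyset} --- which is the only input this proposition ever receives --- this would produce $L$ lines in $\RR^3$ with $\ge L^{2-\eps}$ joints, contradicting the joints theorem, and indeed already contradicting the elementary $O(L^{7/4})$ bound of Chazelle et al.\ once $\eps < 1/4$. So the generic realization you need does not exist; the matroid of the proposition is in general not representable by an affine point set in $\RR^3$, and that non-representability is the entire point of the paper. A secondary problem is that your combinatorial closure $C$ does not correctly capture which coplanarities are forced in every realization: incidence configurations can force dependencies for geometric reasons (Desargues- and Pappus-type phenomena) that no finite combinatorial closure of this kind detects, so even the reduction of condition (ii) to triangle-freeness is unsound.

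The paper avoids realization entirely: it defines the independent sets of $M$ by fiat (all sets of size $\le 2$; triples not collinear on a line of $\frak L$; quadruples containing no such collinear triple and not contained in an ``angle'' $l \cup l'$ with $l \cap l' \cap E \ne \emptyset$; nothing of size $\ge 5$) and verifies the exchange axiom directly, with triangle-freeness used in the one nontrivial case $|X_1| = 3$, $|X_2| = 4$. The combinatorial observations you make while verifying property (2) (no line of $\frak L$ contains two of $p_1,p_2,p_3$, a line through $x$ and $p_i$ must be $l_i$, etc.) transfer directly to that abstract setting; it is only the ambient $\RR^3$ that has to go.
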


By abuse of notation, we can think of $\frak L$ as a set of $L$ lines in the matroid $M$ on the set $E$.  If $x \in E$
lies in three lines of $\frak L$, then by the second property, $x$ is a joint of $\frak L$ in the matroid $M$.

A crucial point here is that three lines of $\frak L$ may be coplanar in $\RR^n$ but not lie in any plane in the matroid $M$.
Therefore, a point $x \in E$ may not be a joint for the lines $\frak L \subset \RR^n$, but may still be a joint for the lines
$\frak L$ in the matroid $M$ on $E$.

Based on a construction by Behrend \cite{Be} and Ajtai and Szemer\'edi \cite{ajtai}, we construct examples of $(E, \frak L)$ which are triangle free but still have many triple intersection points.  (Recall that a point $x \in E$ is called a triple intersection point (for $\frak L$) if $x$ lies in three distinct lines of $\frak L$.)

\begin{prop} \label{funnyset} For any $\eps > 0$, and for arbitrarily large $L$, we can find a set $\frak L$ of $L$ lines in $\RR^2$ and a
set $E \subset \RR^2$ with the following properties.

\begin{enumerate}

\item The pair $(E, \frak L)$ is triangle free.

\item Each line of $\frak L$ contains at least two points of $E$.

\item The number of triple intersection points in $E$ is $\ge L^{2 - \eps}$.

\end{enumerate}

\end{prop}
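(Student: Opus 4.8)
The plan is to realise $E$ as a ``corner-free'' subset of an $n\times n$ grid in $\RR^2$ and $\frak L$ as the horizontal, vertical, and slope-$(-1)$ lines through it; the point will be that a combinatorial triangle of $(E,\frak L)$ can only be assembled from three pairwise non-parallel lines, and with these three pencils that forces a ``corner'' inside $E$. First I would set up the Behrend--Ajtai--Szemer\'edi input. Fix a small $\eps'>0$ (to be taken $\ll\eps$ at the end). By Behrend's construction \cite{Be}, for all large $n$ there is a set $A\subseteq\{n,n+1,\dots,2n\}$ with no three-term arithmetic progression and $|A|\ge n^{1-\eps'}$. Put
\[
S:=\{(x,y)\in\{1,\dots,n\}^2 : x+2y\in A\}.
\]
If $(x,y),(x+d,y),(x,y+d)\in S$ with $d\ne 0$, then $x+2y,\ x+2y+d,\ x+2y+2d$ is a three-term progression inside $A$, a contradiction, so $S$ contains no three points of this shape (this is the Ajtai--Szemer\'edi-style reduction \cite{ajtai}). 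For each $v\in A\subseteq[n,2n]$ there are at least $n/3$ pairs $(x,y)\in\{1,\dots,n\}^2$ with $x+2y=v$, so $|S|\ge\tfrac{n}{3}|A|\ge n^{2-2\eps'}$ for $n$ large. Now let $E:=S$, and let $\frak L$ be the set of all lines of the form $\{y=b\}$, $\{x=a\}$, or $\{x+y=c\}$ that contain at least two points of $S$; this makes property (2) hold by construction. There are at most $n$ horizontal, $n$ vertical, and $2n$ anti-diagonal lines available, so $L:=|\frak L|\le 4n$, while $L\ge n^{1-2\eps'}\to\infty$ since $S$ is too large to lie in few lines; so this produces arbitrarily large $L$.

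For property (1), suppose $x_1,x_2,x_3\in E$ and $l_1,l_2,l_3\in\frak L$ formed a triangle. No two of the $l_i$ can be parallel: if $l_i\parallel l_j$ with $l_i\ne l_j$, then $l_i\cap l_j=\varnothing$, so the two points of $\{x_1,x_2,x_3\}$ lying on $l_i$ cannot lie on $l_j$, leaving $l_j$ with at most one of the three points, contrary to the definition of a triangle. Since $\frak L$ uses only the slopes $0$, $\infty$, and $-1$, we may relabel so that $l_1=\{y=b\}$, $l_2=\{x=a\}$, $l_3=\{x+y=c\}$. Their three pairwise intersection points are $(a,b)$, $(a,c-a)$, and $(c-b,b)$; writing $d:=c-a-b$, these are $(a,b)$, $(a,b+d)$, and $(a+d,b)$, and $d\ne 0$ because otherwise the three points coincide. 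But all three lie in $E=S$, so $S$ contains a corner, a contradiction. Hence $(E,\frak L)$ is triangle free.

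For property (3), every point $(a,b)\in S$ lies on the three lines $\{y=b\}$, $\{x=a\}$, and $\{x+y=a+b\}$, so it is a triple intersection point of $\frak L$ as soon as all three of these lines lie in $\frak L$. The line $\{y=b\}$ is omitted from $\frak L$ only when $(a,b)$ is the unique point of $S$ in its row, and similarly for $\{x=a\}$ (its column) and $\{x+y=a+b\}$ (its anti-diagonal); there are at most $n+n+2n=4n$ such exceptional points of $S$. Hence $\frak L$ has at least $|S|-4n\ge n^{2-2\eps'}-4n\ge\tfrac12 n^{2-2\eps'}$ triple intersection points, and since $L\le 4n$ this is at least $\tfrac12(L/4)^{2-2\eps'}\ge L^{2-\eps}$ for $n$ (hence $L$) large, provided $\eps'$ was chosen small enough in terms of $\eps$. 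Together with Proposition \ref{matroid}, this configuration then yields a simple matroid with $L$ lines and $\ge L^{2-\eps}$ joints, which is Theorem \ref{mainthm}.

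I expect the only genuine content to be the parallelism observation that collapses the combinatorial notion of a triangle in $(E,\frak L)$ onto the geometric notion of a corner in $S$ — choosing the three pencils so that this works is the crux. After that, everything is Behrend's estimate plus routine bookkeeping: placing $A$ in the middle third so that the fibers of $(x,y)\mapsto x+2y$ are of size $\asymp n$, and discarding the at most $4n$ lines carrying fewer than two points of $S$ while noting this destroys at most $4n$ of the $\asymp n^2$ triple points. (One should also double-check that ``corner-free'' is understood to forbid the configuration $\{(x,y),(x+d,y),(x,y+d)\}$ for $d$ of either sign, which the reduction above indeed provides.)
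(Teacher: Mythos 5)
Your proposal is correct and is essentially the paper's own construction: a Behrend AP-free set pulled back along a linear functional to a grid, with three pencils of parallel lines, the parallelism argument forcing a triangle to use one line from each pencil, and the three intersection points then mapping to a three-term progression. Your version (slopes $0,\infty,-1$ with the functional $x+2y$, phrased via corner-free sets) is an affine change of coordinates away from the paper's (slopes $0,\infty,+1$ with the functional $x+y$), and the bookkeeping of discarded lines and exceptional points matches.
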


Theorem \ref{mainthm} follows immediately from these two Propositions.  Let $(E, \frak L)$ be the points and lines given in
Proposition \ref{funnyset}.  By Proposition \ref{matroid}, we can find a matroid structure $M$ on $E$ so that each line of $\frak L$ corresponds to a line of $M$.  Each triple intersection point of $(E, \frak L)$ corresponds to a joint of the lines $\frak L$ in the matroid $M$.  So in this simple matroid, we have a set of $L$ lines that determines $\ge L^{2 - \eps}$ joints.  Incidentally, the rank of $M$ is 4, the same as the rank of $\RR^3$ equipped with affine independent subsets.

\subsection{Matroids from triangle free configurations}

In this subsection, we prove Proposition \ref{matroid}.  The proof below is due to Francisco Santos.  It greatly simplifies our original argument.  

Suppose that $E \subset \RR^n$ and $\frak L$ is a set of lines in
$\RR^n$.  Suppose that $(E, \frak L)$ is triangle free, and that each line of $\frak L$ contains at least two points of $E$.
We have to construct a simple matroid on $E$ with some good properties.  To do this, we list the independent and dependent sets
of the matroid.  Then we will check that they obey the axioms of a simple matroid.

The empty set is independent.  Any set with one or two points is independent.

A set with 3 points is dependent if and only if all three points lie on a line $l \in \frak L$.

A set $X$ with 4 points is dependent if either of the following occurs:

\begin{enumerate}

\item $X$ contains three points which lie on a line $l \in \frak L$.

\item There are lines $l, l' \subset \frak L$ so that $X \subset l \cup l'$ and $l \cap l' \cap E$ is non-empty.

\end{enumerate}

Any set with more than 4 points is dependent.

If $l, l' \subset \frak L$ and $l \cap l' \cap E$ is non-empty, we call $l \cup l'$ an angle.  So a set of 4 points is dependent if either three of the points lie in a line of $\frak L$ or all of the points lie in an angle. 

\begin{prop} If $(E, \frak L)$ is triangle free, then this list of independent sets obeys the axioms of a simple matroid.
\end{prop}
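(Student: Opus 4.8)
The plan is to verify Axioms 1--3 for the list of independent sets described above, together with the simplicity condition, which is immediate since every set of one or two points is declared independent. Axiom 1 (the empty set is independent) is listed directly, and Axiom 2 (subsets of independent sets are independent) is easy: the only way to fail independence is to contain three collinear points (on a line of $\frak L$), to contain four points lying in an angle, or to have more than four points, and each of these properties is inherited by supersets rather than subsets, so a subset of an independent set cannot acquire any of them. The real content is Axiom 3, the exchange axiom.

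For Axiom 3, suppose $X_1$ and $X_2$ are independent with $|X_1| < |X_2|$; since the largest independent sets have $4$ points, $|X_1| \le 3$. I would split into cases on $|X_1| \in \{0,1,2,3\}$. The cases $|X_1| = 0,1$ are trivial: adding any one or two points to a set of $\le 1$ point stays independent, so any $e \in X_2 \setminus X_1$ works. The case $|X_1| = 2$ requires finding $e \in X_2 \setminus X_1$ so that $X_1 \cup \{e\}$ is independent, i.e. so that the three points of $X_1 \cup \{e\}$ are not collinear on a line of $\frak L$. Write $X_1 = \{a,b\}$; if $\{a,b\}$ lies on no line of $\frak L$, then \emph{every} $e$ works. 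If $\{a,b\}$ lies on a (unique, by triangle-freeness one checks there is at most one such line through two points anyway, but we don't even need that) line $l \in \frak L$, then a bad choice of $e$ is one with $e \in l$. Since $X_2$ is independent it has at most two points on $l$ --- actually, since $|X_2| \ge 3$ and $X_2$ is independent, $X_2$ has at most two points on any single line of $\frak L$ --- so $X_2 \setminus l$ is nonempty unless $X_2 \subset l$; but $X_2 \subset l$ with $|X_2| = 3$ would make $X_2$ dependent. Hence some $e \in X_2 \setminus (X_1 \cup l)$ exists and $X_1 \cup \{e\}$ is independent.

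The main obstacle is the case $|X_1| = 3$, $|X_2| = 4$: I must find $e \in X_2 \setminus X_1$ with $X_1 \cup \{e\}$ independent, i.e. the resulting $4$-point set contains no three collinear points of $\frak L$ and does not lie in an angle. Since $X_1$ is independent, its three points are not collinear on a line of $\frak L$, so at most one line of $\frak L$ meets $X_1$ in two points; enumerate the (at most) three pairs from $X_1$, and for each pair lying on a line $l \in \frak L$, a point $e$ is ``bad of the first kind'' if $e \in l$. A point $e$ is ``bad of the second kind'' if $X_1 \cup \{e\}$ lies in an angle $l \cup l'$. The key structural input is triangle-freeness of $(E,\frak L)$: I claim it forces at most one point of $X_2 \setminus X_1$ to be bad, which, since $|X_2 \setminus X_1| = |X_2| - |X_1 \cap X_2| \ge 4 - 3 = 1$... here I need $|X_2 \setminus X_1| \ge 2$, which fails if $X_1 \subset X_2$. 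So I would first dispose of the sub-case $X_1 \subset X_2$: then $X_2 = X_1 \cup \{e\}$ for the unique $e \in X_2 \setminus X_1$, and since $X_2$ is independent this $e$ already works. In the remaining sub-case $|X_2 \setminus X_1| \ge 2$, I argue that ``bad'' points are scarce: if two distinct points $e_1, e_2 \in X_2 \setminus X_1$ were both bad of the first kind via the same line $l$ through a pair $\{a,b\} \subset X_1$, then $\{a,b,e_1,e_2\} \subset l$, but $e_1,e_2 \in X_2$ and $a$ or $b$ may not be in $X_2$ --- so instead I use that $X_1 \cup \{e_i\}$ being dependent of the first kind means three of its points are collinear, and carefully track how collinearity plus the angle condition would produce a triangle among points of $E$ and lines of $\frak L$; the triangle-free hypothesis then rules out having two bad choices among $e_1, e_2$, leaving a good $e$. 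I expect the bookkeeping in this last sub-case --- separating bad-of-the-first-kind from bad-of-the-second-kind and extracting a forbidden triangle in each configuration where two points of $X_2 \setminus X_1$ are simultaneously bad --- to be the crux, and the place where triangle-freeness is genuinely used.
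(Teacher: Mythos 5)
Your treatment of Axioms 1 and 2, simplicity, and the cases $|X_1| \le 2$ of the exchange axiom matches the paper and is fine. The problem is the main case $|X_1| = 3$, $|X_2| = 4$, which you explicitly leave as a plan rather than a proof, and the plan as stated does not work. Your key claim is that triangle-freeness forces at most one point of $X_2 \setminus X_1$ to be ``bad,'' so that two available points suffice. This is false. Take $X_1 = \{a,b,c\}$ with $\{a,b\} \subset l$ and $\{a,c\} \subset l'$ for $l, l' \in \frak L$ (triangle-freeness only forbids \emph{three} such pairs, not two). Then every point of $(l \cup l') \cap E$ is bad of the first kind, and one can easily have an independent $X_2 = \{e_1,e_2,e_3,e_4\}$ with $e_1 \in l$, $e_2 \in l'$, and $e_3, e_4$ generic: two points of $X_2 \setminus X_1$ are simultaneously bad, yet no triangle of $(E,\frak L)$ is produced (the three collinearities involved are $a,b,e_1 \in l$ and $a,c,e_2 \in l'$, which do not assemble into three lines each containing exactly two of three points). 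So ``two bad points force a triangle'' cannot be the engine of the argument.

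The correct mechanism, which the paper uses, is different: one splits on how many pairs of $X_1$ lie on lines of $\frak L$ (this number is $0$, $1$, or $2$, by triangle-freeness), and in each case one uses the \emph{independence of $X_2$} to escape the obstructions. In the two-pair case above, $l \cup l'$ is an angle (they meet at $a \in E$), so the independent four-point set $X_2$ cannot be contained in $l \cup l'$; any $e \in X_2 \setminus (l \cup l')$ works, and triangle-freeness is invoked to show that $X_1 \cup \{e\}$ cannot lie in some \emph{other} angle (two distinct lines through $c$ and through $a$ or $b$, both meeting $l$ in $E$, would form a triangle). In the one-pair case, triangle-freeness shows $X_1$ lies in at most one angle, and again one picks $e \in X_2$ avoiding that angle (or avoiding $l$ and $c$). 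So the scarcity you need is scarcity of \emph{angles containing $X_1$}, not scarcity of bad points of $X_2$; as written, your argument has a genuine gap exactly at the step you yourself identify as the crux.
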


\begin{proof} Most of the axioms of a matroid can be dealt with quickly.  At the end, there will be one more complex case where
we use that $(E, \frak L)$ is triangle free.

Axiom 1. The empty set is independent.  This is immediate from the definition.

Axiom 2. Any subset of an independent set is independent.  Suppose that $X$ is independent and $X' \subset X$.  We can assume that $X'$ is a proper subset of $X$.  If $X'$ has $\le 2$ points, then $X'$ is independent.  Otherwise, $X'$ must contain
3 points and $X$ must contain 4 points.  Since $X$ is independent, we see that the points of $X'$ are not all on a line of $\frak L$, which means that $X'$ is independent.

Axiom 3. If $X_1$ and $X_2$ are independent sets with $|X_1| < |X_2|$, then there exists $e \in X_2 \setminus X_1$ so that
$X_1 \cup e$ is independent.

We begin with the case that $|X_1| \le 2$, which is the easier case.  If $|X_1| \le 1$, we can take $e$ to be any element of $X_2 \setminus X_1$.  Then $X_1 \cup e$ has at most two elements and is independent.  Now suppose that $|X_1| = 2$.  If $X_1$
is not contained in a line $l \in \frak L$, then again we can take $e$ to be any element of $X_2 \setminus X_1$.  Then $X_1 \cup e$ will be independent because $X_1 \cup e$ will be a set of 3 points which don't all lie on a line of $\frak L$.  Now suppose that
$|X_1| = 2$ and $X_1$ is contained in a line $l \in \frak L$.  This line $l$ must be unique, because two lines intersect in at most
one point.  Since $|X_2| \ge 3$, $X_2$ is not contained in $l$.  We let $e$ be an element of $X_2 \setminus l$.  We see that
$e \in X_2 \setminus X_1$ and that $X_1 \cup e$ is independent.

We are left with only one case: $|X_1| = 3$ and $|X_2| = 4$.  In some sense, this is the main case.  

Suppose that $X_1 = \{ a, b, c \}$.  
Since $X_1$ is independent, the points $a,b,c$ don't all lie on a line of $\frak L$.  We analyze several cases depending on how many pairs of the points of $X_1$ lie on lines of $\frak L$.  Because $(E, \frak L)$ is triangle free, the number of these pairs is 0, 1, or 2.

{\bf Case 0.} Suppose that no pair of $a,b,c$ lies on a line of $\frak L$.  Then let $e$ be any point of $X_2 \setminus X_1$.  Now $X_1 \cup e$ is independent, because no line of $\frak L$ contains three points of $X_1 \cup e$ and no two lines of $\frak L$ contain
$X_1 \cup e$.

{\bf Case 1.} Suppose that $a$ and $b$ lie in a line $l \in \frak L$, and no other pair of points in $X_1$ lies in a line of $\frak L$.  We claim that $X_1$ lies in at most one angle.  Suppose that $l_1 \cup l_2$ is an angle containing $X_1$.  By relabelling, we can assume that $l_1$ contains two points of $X_1$, and so $l_1 = l$.  Now it follows that $l_2$ contains $c$, and $l_2 \cap l \cap E$ is non-empty.  Since $(E, \frak L)$ is triangle free, there is at most one such line $l_2$.  So there is at most one angle containing $X_1$.

If $X_1$ lies in an angle, we can choose a point $e \in X_2$ which doesn't lie in that angle.  In particular, $e \notin X_1$ and $e \notin l$.  Then we claim that $X_1 \cup e$ is independent.  The only line of $\frak L$ that contains $\ge 2$ points of $X_1$ is $l$.  Since $e \notin l$, no line contains three points of $X_1 \cup e$.  Also, there is only one angle containing $X_1$ and $e$ is not in the angle.  

If $X_1$ does not lie in any angle, then we choose $e$ as follows.  We know that $l$ contains $\le 2$ points of $X_2$, so we can choose $e \in X_2$ with $e \notin l$ and $e \not= c$.  Hence $e \notin X_1$.  We claim that $X_1 \cup e$ is independent.  As above, 
the only line of $\frak L$ that contains $\ge 2$ points of $X_1$ is $l$.  Since $e \notin l$, no line contains three points of $X_1 \cup e$.  Clearly $X_1 \cup e$ is not contained in any angle.

{\bf Case 2.} Suppose that $a$ and $b$ lie in $l \in \frak L$ and $a,c$ lie in $l' \in \frak L$.  Note that $l \cap l' \cap E$ is non-empty: it contains $a$.  Therefore, $l \cup l'$ is an angle.  Since $X_2$ is independent, $X_2$ is not contained in $l \cup l'$.  We choose $e \in X_2$ with $e \notin l \cup l'$, and therefore $e \notin X_1$.  We claim again that $X_1 \cup e$ is independent.  The only lines that contain $\ge 2$ points of $X_1$ are $l$ and $l'$.  Since $e \notin l \cup l'$, no line of $\frak L$ contains three points of $X_1 \cup e$.  

Now suppose that $X_1 \cup e$ is contained in an angle $l_1 \cup l_2$.  By relabelling $l_1$ and $l_2$, we can assume that $l_1$ contains at least two points of $X_1$, and so $l_1$ is $l$ or $l'$.  By relabelling the points in $X_1$, we can assume that $l_1 = l$.  So we know that $l_2$ and $l$ make an angle.  Now $l_2$ contains $e$, but $l'$ does not contain $e$.  Therefore, $l_2 \not= l'$, and $l, l', l_2$ are three distinct lines.  But $l_2$ and $l'$ both contain $c$.  Therefore, $l_2$ and $l'$ make an angle.  In summary $l_2, l,$ and $l'$ are three distinct lines, and any two of them make an angle.  Then $l, l', l_2$ make a triangle.  This contradiction shows that $X_1 \cup e$ is independent.

We have now checked that our definition of independent sets of $E$ obeys the axioms of a matroid. Finally, any set of one or two points is
independent, so our matroid is simple.  \end{proof}

For each triangle free configuration $(E, \frak L)$, we have defined a matroid $M(E, \frak L)$.  We can now finish the proof of Proposition \ref{matroid}.  We suppose that $(E, \frak L)$ is triangle free and that each line of $\frak L$ contains at least two
points of $E$.  We just need to check that the matroid $M(E, \frak L)$ has the desired properties.

Property 1. For each line $l \in \frak L$,  $l \cap E$ is a line in the matroid $M$.

We have to check that $l \cap E$ is a maximal rank 2 set in the matroid $M$.  By our definition of dependence, any three points
on $l \cap E$ are a dependent set.  Therefore, the rank of $l \cap E$ is at most 2.  We know that $l \cap E$ contains two points,
and any set of two points is independent.  Therefore, the rank of $l \cap E$ is exactly two.  Now suppose that $e \in E \setminus l$ -- we have to check that the rank of $(l \cap E) \cup e$ is 3.  Let $X$ be the union of $e$ and two points of $l \cap E$.  We claim that $X$ does not lie in any line $l' \in \frak L$, and so $X$ is independent.  Since $e \notin l$, $X$ is not contained in $l$.
If $l \not= l'$, then $l'$ can only contain one point of $l$, and so $X$ is not contained in $l'$.  Therefore, $l \cap E$ is a maximal
rank 2 set in our matroid.

Property 2. If $x \in E$ and $l_1, l_2, l_3 \in \frak L$ are lines containing $x$, then $E \cap (l_1 \cup l_2 \cup l_3)$ is not contained in any plane of the matroid $M$.

Each line of $\frak L$ contains at least two points of $E$.  Let $a_i$ be a point of $l_i \setminus \{ x \}$ for $i = 1, 2, 3$.  Let $X$ be the set $\{ x, a_1, a_2, a_3 \} \subset E \cap (l_1 \cup l_2 \cup l_3)$.  It suffices to prove that $X$ is independent.  Since $(E, \frak L)$ is triangle free, no line of $\frak L$ contains any two of the points $a_1, a_2, a_3$.  Therefore, no line of $\frak L$ can contain three points of $X$.  Also, no two lines of $\frak L$ can contain $X$.  Therefore, $X$ is independent.

3. The matroid $M$ has rank at most 4.

This follows immediately because every set of 5 points is dependent.

This finishes the proof of Proposition \ref{matroid}.

\subsection{Triangle free configurations}

In this subsection, we prove Proposition \ref{funnyset}.  We produce a configuration of points and lines in $\RR^2$ with no
triangles but many triple intersection points.

We begin with a grid of horizontal, vertical, and diagonal lines.  We call this set of lines $\frak L_0$, and the final set $\frak L$ will
be a subset of $\frak L_0$.  The set $\frak L_0$ consists of the following lines:

\begin{itemize}

\item Horizontal lines $y = b$ for each integer $b = 1, ..., N$.

\item Vertical lines $x = a$ for each integer $a = 1, ..., N$.

\item Diagonal lines $x - y = c$ for each integer $c = - N, ..., N$.

\end{itemize}

The number of lines of $\frak L_0$ is $L_0 = 4 N+1$.

Next we consider the set of points $E$.  We let $E_0$ be the grid of integer points $(a,b)$ with $1 \le a,b \le N$.  We note that each point
of $E_0$ is a triple intersection point for $\frak L_0$.  There are $N^2 \sim L_0^2$ points in $E_0$.  However, $(E_0, \frak L_0)$
has many triangles.  We will prune the set $E_0$ to get rid of the triangles.  Remarkably, there is a subset
$E \subset E_0$ of size $\sim N^{2-\eps}$ so that $(E, \frak L_0)$ is triangle free!  This is the heart of the proof.

This argument is based on Behrend sets.  Behrend was interested in subsets of the integers $1... N$ with no 3-term arithmetic progressions.  (Recall that a 3-term arithmetic progression is just a sequence $a, a+d, a+ 2d$, where $a, d$ are real numbers.)  How
large is the largest subset of $1 ... N$ with no 3-term arithmetic progression?  Behrend gave remarkably large examples.

\begin{theorem}[Behrend, \cite{Be}] For any $\epsilon > 0$, for any sufficiently large $N$, there is a set $B \subset 1 ... N$ so that
$B$ has no 3-term arithmetic progression and $|B| > N^{1 - \eps}$.
\end{theorem}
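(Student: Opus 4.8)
The plan is to reproduce Behrend's construction, whose essential idea is to first build a progression-free set inside a high-dimensional cube, where Euclidean geometry is available, and then transport it down to $\{1,\dots,N\}$ by a carry-free digit encoding. First I would fix a dimension $d$ and a base $q$ (to be chosen in terms of $\eps$ only at the end) and consider the cube $C = \{0,1,\dots,q-1\}^d \subset \ZZ^d$. Partitioning $C$ according to the value $|x|^2 = x_1^2 + \cdots + x_d^2$, which lies in $\{0,1,\dots,d(q-1)^2\}$, the pigeonhole principle yields a ``sphere'' $S = \{x \in C : |x|^2 = r\}$ with
\[
|S| \;\geq\; \frac{q^d}{d(q-1)^2 + 1} \;\geq\; \frac{q^{d-2}}{d}.
\]
The point of using a sphere is that it contains no three distinct lattice points in arithmetic progression: if $x,y,z \in S$ with $x + z = 2y$, the parallelogram identity gives $|x|^2 + |z|^2 = 2|y|^2 + \tfrac12 |x-z|^2$, and $|x|^2 = |y|^2 = |z|^2 = r$ forces $x = z$.

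Next I would transport $S$ to the integers via the map $\phi\colon C \to \ZZ$, $\phi(x) = \sum_{i=1}^d x_i (2q)^{i-1}$. Since each digit satisfies $0 \le x_i \le q-1 < 2q$, base-$2q$ addition of $\phi(x)$ and $\phi(z)$ never carries, so $\phi(x) + \phi(z) = 2\phi(y)$ in $\ZZ$ holds if and only if $x + z = 2y$ coordinatewise in $\ZZ^d$; in particular $\phi$ is injective. Hence $B_0 := \phi(S) \subset \{0,1,\dots,(2q)^d-1\}$ has $|B_0| = |S| \ge q^{d-2}/d$ and no three-term arithmetic progression, and the translate $B := B_0 + 1 \subset \{1,\dots,(2q)^d\}$ is progression-free of the same size.

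Finally I would choose parameters. Given $\eps > 0$, fix an integer $d > 4/\eps$, so that $(d-2)/d > 1 - \eps$. With $N := (2q)^d$ we have
\[
\frac{\log|B|}{\log N} \;\geq\; \frac{(d-2)\log q - \log d}{d\log q + d\log 2},
\]
whose right-hand side tends to $(d-2)/d > 1-\eps$ as $q \to \infty$; so $|B| > N^{1-\eps}$ for all $q$ large enough in terms of $\eps$. This gives the theorem along the sequence $N = (2q)^d$, $q\to\infty$; for an arbitrary large $N$ one takes the largest $q$ with $(2q)^d \le N$, runs the construction inside $\{1,\dots,(2q)^d\}\subseteq\{1,\dots,N\}$, and absorbs the resulting bounded loss by a routine adjustment of $\eps$. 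The step I would write out with care is the digit map, since the whole argument rests on ``no carrying'' making $\phi$ a Freiman-type isomorphism for the relation $x+z=2y$. The only genuine obstacle, though, is the idea itself: that one should leave $\ZZ$, pass to a dimension $d$ of order $\log N$, and exploit the strict convexity of the Euclidean sphere; granting that, the encoding and the optimization of $d$ and $q$ are routine.
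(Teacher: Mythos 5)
Your proposal is correct and follows essentially the same approach as the paper's appendix: Behrend's construction via pigeonholing onto a sphere $S$ in the cube $\{0,\dots,q-1\}^d$, using strict convexity to rule out collinear triples, and then transporting $S$ to the integers by a carry-free base-$2q$ digit encoding. The only difference is the parameter choice---you fix the dimension $d$ as a constant depending on $\eps$ and let $q\to\infty$, whereas the paper takes $n\sim\sqrt{\log N}$ to obtain the sharper bound $|B|\ge N^{1-c/\sqrt{\log N}}$---but either choice yields the statement as given.
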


We will explain Behrend's construction in Section \ref{Behrendapp}.

We let $B \subset 1 ... N$ be a Behrend set, and we use it to define $E$ as follows:

$$ E := \{ (a,b) \in \ZZ^2 | 1 \le a, b \le N \textrm{ and } a + b \in B \}. $$

The pair $(E, \frak L_0)$ still has many triple intersection points, and we will see that it is triangle free.
We would like to prove that $(E, \frak L_0)$ has $\ge N^{2 - \eps}$ triple intersection points.  But since $\eps$ is arbitrary, it's enough to
prove a weaker estimate like $\ge (1/20) N^{2 - 2 \eps}$.  For any subset $D \subset 1 ... N$, the set $\{ (a,b) | 1 \le a, b \le N \textrm{ and }
a + b \in D \}$ has $\ge (1/2) |D|^2$ elements.  The worst case occurs when $D$ is the first $|D|$ integers, and then the set is a lower left corner of
the square.  In particular $|E| \ge (1/2) N^{2 - 2 \eps}$.   Every point of $E$ is a triple intersection point for $\frak L_0$.

\begin{lemma} The pair $(E, \frak L_0)$ is triangle free.
\end{lemma}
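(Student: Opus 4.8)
The plan is to show that a triangle in $(E, \frak L_0)$ would produce a nondegenerate $3$-term arithmetic progression inside the Behrend set $B$, contradicting the choice of $B$.

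First I would pin down the shape of a hypothetical triangle. Suppose $P_1, P_2, P_3 \in E$ are distinct and $l_1, l_2, l_3 \in \frak L_0$ are three distinct lines with each $l_i$ containing exactly two of the points $P_j$. Since the line of $\frak L_0$ through a given pair of the points is unique, each $l_i$ is one of the three lines joining the pairs, so $\{l_1, l_2, l_3\}$ is exactly the set of these three joining lines; in particular the three points are not collinear, since otherwise all three joining lines would coincide. Now any two of the joining lines share a common vertex, so if two of them had the same slope they would be equal, again forcing collinearity. Hence the three joining lines have three pairwise distinct slopes. But every line of $\frak L_0$ has slope $0$, $\infty$, or $1$, so, after relabelling the points, the horizontal line joins $P_1$ and $P_2$, the vertical line joins $P_2$ and $P_3$, and the slope-$1$ diagonal joins $P_1$ and $P_3$.

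Next I would compute in coordinates. The horizontal and vertical incidences force $P_1 = (\gamma, \beta)$, $P_2 = (\alpha, \beta)$, $P_3 = (\alpha, \delta)$ for integers $\alpha, \beta, \gamma, \delta$, with $\gamma \neq \alpha$ because $P_1 \neq P_2$; and the condition that $P_1 P_3$ lies on a diagonal $x - y = c$ reads $\gamma - \beta = \alpha - \delta$. Since $P_1, P_2, P_3 \in E$, the three integers $\gamma + \beta$, $\alpha + \beta$, $\alpha + \delta$ all lie in $B$. Substituting $\delta = \alpha - \gamma + \beta$ gives the identity $(\gamma + \beta) + (\alpha + \delta) = 2(\alpha + \beta)$, so these three elements of $B$ form a $3$-term arithmetic progression with common difference $\alpha - \gamma \neq 0$. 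This contradicts the defining property of the Behrend set $B$, and therefore $(E, \frak L_0)$ contains no triangle.

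I do not anticipate a real obstacle here. The only points requiring care are in the first paragraph: justifying that the three sides of the triangle must realize all three available slopes, so that the relabelling is legitimate, and, at the end, observing that the arithmetic progression obtained is genuinely nondegenerate, which is exactly the condition $P_1 \neq P_2$, i.e.\ $\gamma \neq \alpha$. Everything else is the one-line algebraic identity above.
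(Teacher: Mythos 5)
Your proposal is correct and follows essentially the same route as the paper: identify that a triangle must consist of one horizontal, one vertical, and one slope-$1$ line, then check algebraically that the three sums $a+b$ of its vertices form a nontrivial $3$-term arithmetic progression in the Behrend set $B$. Your write-up is somewhat more careful than the paper's on two points it leaves implicit --- why the three sides must realize all three slopes, and why the resulting progression is nondegenerate --- but the argument is the same.
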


\begin{proof} Suppose that $l_1, l_2, l_3$ are lines of $\frak L_0$ forming a triangle.  No two of these lines are parallel, so there must be
one horizontal line, one vertical line, and one diagonal line.  We label them so that $l_1$ is horizontal, $l_2$ is diagonal, and $l_3$ is vertical.  Let $x_1 = (a_1, b_1)$ be the intersection of $l_2$ with $l_3$, and $x_2 = (a_2, b_2)$ be the intersection of $l_1$ and $l_3$ and $x_3 = (a_3, b_3)$ be the
intersection of $l_1$ and $l_2$.  We have $x_1, x_2, x_3 \in E$,
and so $a_1 + b_1, a_2 + b_2, a_3 + b_3 \in B$.  But we claim that the geometry of the situation forces $a_1 + b_1, a_2 + b_2,
a_3 + b_3$ to be a 3-term arithmetic progression.  This contradiction will prove the lemma.

The reader may want to draw a picture to check this.  We give an algebraic proof as follows.  The points $x_1, x_2$ are on the same vertical line $l_3$ and so $a_1 = a_2$.  Next the points $x_2, x_3$ are on the
same horizontal line $l_1$, and so $b_2 = b_3$.  Finally, the points $x_1, x_3$ are on the same diagonal line, and so $a_1 - b_1 =
a_3 - b_3$.  Using these equations, we want to check that $a_1 + b_1, a_2 + b_2, a_3 + b_3$ forms a 3-term arithmetic progression.  This boils down to checking

$$[a_3 + b_3] - [a_2 + b_2] = [a_2 + b_2] - [a_1 + b_1]. $$

Using the equations:

$$ [a_3 + b_3] - [a_2 + b_2] = a_3 - a_2 = a_3 - a_1 = b_3 - b_1 = b_2 - b_1 = [a_2 + b_2] - [a_1 + b_1]. $$

\end{proof}

The pair $(E, \frak L_0)$ has many triple intersection points and no triangles.  The rest of the proof is minor.  We also want to know that each
line of $\frak L$ contains at least two points of $E$.  Some lines of $\frak L_0$ contain no point of $E$ or only one point of $E$.
We define $\frak L \subset \frak L_0$ to be the set of lines in $\frak L_0$ containing at least two points of $E$.

The pair $(E, \frak L)$ is still triangle free.  It may have fewer triple points, but not by much.  The number of points of $E$ contained
in a line of $\frak L_0 \setminus \frak L$ is at most $L_0 = 4 N + 1$.  So the number of triple points of $(E, \frak L)$ is at least
$(1/2) N^{2 - 2 \eps} - (4N + 1)$.  For $N$ sufficiently large this is $\ge (1/20) N^{2 - 2 \eps}$.  Since this holds for every
$\eps > 0$, the number of triple points of $E$ is also $\ge N^{2 - \eps}$.

This finishes the proof of Proposition \ref{funnyset}.

\section{Open problems}

The joints theorem was generalized to higher dimensions by \cite{KSS} and \cite{Q}.  Suppose that $\frak L$ is a set of lines in $\RR^n$.  A point
$x \in \RR^n$ is called an $n$-dimensional joint if $x$ lies in $n$ lines of $\frak L$ which are not contained in any $(n-1)$-dimensional plane.  Kaplan-Sharir-Shustin
and Quilodr\'an proved the following sharp estimate for $n$-dimensional joints.

\begin{theorem} For each dimension $n \ge 3$, there is a constant $C_n$ so that following holds.  Any set of $L$ lines in $\RR^n$ determines at most
$C_n L^{\frac{n}{n-1}}$ $n$-dimensional joints.
\end{theorem}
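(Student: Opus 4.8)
The plan is to prove this by the polynomial method, as in the arguments of Guth--Katz (for $n=3$) and Kaplan--Sharir--Shustin and Quilodr\'an in general. Everything reduces to the following "cheap line" lemma: \emph{there is a dimensional constant $c_n$ so that, for every finite set $\frak{L}$ of lines in $\RR^n$ whose set of $n$-dimensional joints has size $J$, some line of $\frak{L}$ contains at most $c_n J^{1/n}$ joints.} Granting this, I would build a greedy ordering of the lines by repeatedly extracting a cheap line: put $\frak{L}_L=\frak{L}$, and having defined $\frak{L}_i=\{\ell_1,\dots,\ell_i\}$ with $J_i$ joints, let $\ell_i\in\frak{L}_i$ be a line meeting at most $c_n J_i^{1/n}\le c_n J^{1/n}$ joints of $\frak{L}_i$ (here $J_i\le J$ since $\frak{L}_i\subseteq\frak{L}$), and set $\frak{L}_{i-1}=\frak{L}_i\setminus\{\ell_i\}$.

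For the counting step, assign to each joint $x$ of $\frak{L}$ the largest index $j(x)$ of a line of $\frak{L}$ through $x$. The $n$ lines witnessing that $x$ is a joint all have index $\le j(x)$, so they lie in $\frak{L}_{j(x)}$; hence $x$ is a joint of $\frak{L}_{j(x)}$ lying on the line $\ell_{j(x)}$. Grouping the joints of $\frak{L}$ by the value of $j(x)$ and invoking the lemma gives
$$J=\sum_{i=1}^{L}\#\{x:\ j(x)=i\}\ \le\ \sum_{i=1}^{L}\#\{\text{joints of }\frak{L}_i\text{ on }\ell_i\}\ \le\ L\,c_n J^{1/n},$$
so $J^{(n-1)/n}\le c_n L$, i.e.\ $J\le c_n^{\,n/(n-1)}L^{n/(n-1)}$, which is the theorem (the case $J=0$ being vacuous).

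The heart of the matter, and the step I expect to be the main obstacle, is the cheap line lemma, which is where high-degree polynomials come in. Suppose toward a contradiction that every line of $\frak{L}$ contains more than $c_n J^{1/n}$ joints. Since the space of polynomials on $\RR^n$ of degree $\le d$ has dimension $\binom{d+n}{n}>d^n/n!$, taking $d$ of order $J^{1/n}$ (and $c_n$ a bit larger than the implicit constant) produces a nonzero polynomial vanishing at all $J$ joints; let $P\ne 0$ be one of \emph{minimal} degree $d_0\le d$ with this property. On each line $\ell\in\frak{L}$, the restriction $P|_\ell$ is a univariate polynomial of degree $\le d_0$ vanishing at more than $c_n J^{1/n}\ge d_0$ points, so $P\equiv 0$ on $\ell$; thus $P$ vanishes on every line of $\frak{L}$ and in particular is nonconstant, so some partial derivative $\partial_i P$ is not identically zero. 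At a joint $x$ with spanning direction vectors $v_1,\dots,v_n$, differentiating $t\mapsto P(x+tv_k)\equiv0$ yields $v_k\cdot\nabla P(x)=0$ for all $k$, and since the $v_k$ span $\RR^n$ — which is exactly the non-degeneracy of the joint — we get $\nabla P(x)=0$. Hence $\partial_i P$ is a nonzero polynomial of degree $d_0-1<d_0$ vanishing at every joint, contradicting minimality of $d_0$. The delicate points to get right are the dimension count that pins down $c_n$, the fact that deleting a line destroys only joints that lay on it (legitimizing the greedy telescoping), and the use of the span condition on the joint's lines to force the gradient to vanish.
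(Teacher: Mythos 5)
The paper itself contains no proof of this theorem: it is stated in the open-problems section as a quoted result of Kaplan--Sharir--Shustin \cite{KSS} and Quilodr\'an \cite{Q}, so there is nothing in-paper to compare against. Your argument is correct and is essentially Quilodr\'an's proof: the minimal-degree polynomial vanishing on the joints, the observation that the spanning directions at a non-degenerate joint force $\nabla P$ to vanish there, and the greedy extraction of a cheap line are exactly the standard route, and your bookkeeping (choosing $c_n$ after the dimension count so that $c_n J^{1/n}\ge d_0$, noting $J_i\le J$ when telescoping, and disposing of $J=0$ separately so the minimal-degree polynomial is nonconstant) is sound.
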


(The result is also true for $n=2$.  If $n=2$ the result follows immediately from the fact that two lines intersect in at most one point.)

We can make a matroid version of the higher-dimensional joints problem as follows.  Let $M = (E, \mathcal{I})$ be a simple matroid.  Let
$\frak L$ be a set of lines in $M$.  A point $x \in E$ is an $n$-dimensional joint of $\frak L$ if there are $n$ lines $l_1, ...,
l_n \in \frak L$ so that $x \in l_i$ for each $i$ and the rank of $\cup_{i=1}^n l_i$ is $\ge n+1$. Now fix $n \ge 3$.  For a given $L$, what is the maximum number of $n$-dimensional joints that can be formed by $L$ lines in a simple matroid?

For $n=3$, the theorems in this paper give fairly close upper and lower bounds.  If $n \ge 4$, an $n$-dimensional joint is a special case of a 3-dimensional joint.  By Theorem \ref{upperbound}, the number of $n$-dimensional joints is $o(L^2)$ for every $n \ge 3$.  (But not for $n=2$.)  Our examples only give
3-dimensional joints, so we don't have interesting lower bounds.  Can $L$ lines determine $L^{2 - \eps}$ $n$-dimensional joints for larger $n$?

The paper \cite{CEGPSSS} proves that $L$ lines in $\RR^3$ determine $\le C L^{7/4}$ joints.  The proof is based on reguli.  Reguli are degree 2 algebraic surfaces that have a special relationship to lines in $\RR^3$.  Between 1992 and 2008, mathematicians were trying to prove the joints theorem using reguli, but without using high degree algebraic surfaces.  It seems to be difficult to get a sharp exponent by this approach.  It would be interesting to understand why the joints theorem is hard to prove using only lines, planes, and reguli.  Our paper doesn't address this question because matroids don't contain
reguli.  It might be interesting to axiomatize the properties of lines, planes, and reguli, and to see what estimate in the joints problem follows in those
axioms.

\section{Appendix: Behrend sets} \label{Behrendapp}

Here we provide the proof of Behrend \cite{Be}, showing that there are indeed large subsets of $1...N$ with no 3-term arithmetic progressions.  All logarithms are in base 2.  Let $N$ be given and large, and let $n$ and $s$ be integer parameters which will be specified later.  Set $\mathcal{G} = \{0,1,...,s-1\}^n \subset \mathbb{R}^n$ and $S_k = \{x \in \mathcal{G}: ||x||^2 = k\}$, where $||.||$ denotes the Euclidean norm.  Since $\mathcal{G} = \bigcup_{k = 0}^{n(s-1)^2} S_k$, by the pigeonhole principle there exists $k$ such that $|S_k| \geq s^{n-2}/n$.  Since the points of $S_k$ lie on a sphere, no three members in $S_k$ are collinear. We let $n = \lfloor \sqrt{\log N}\rfloor$, and let $s$ be the largest integer such that $(2s)^{n} \leq N$.  Roughly speaking, $s$ will be about $2^{\sqrt{\log N}}/2$.

Now let

$$B = \left\{\sum\limits_{i = 1}^n x_i(2s)^{i-1}: (x_1,x_2,...,x_n) \in S_k\right\}.$$

\noindent Clearly $B$ is a subset of $1...N$ since all elements in $B$ are at most $(2s)^n \leq N$.  Furthermore, the elements in $B$ are distinct.  Indeed for $(x_1,...,x_n),(y_1,...,y_n) \in S_k$, suppose

$$\sum\limits_{i = 1}^n  x_i  (2s)^{i-1} = \sum\limits_{i = 1}^n   y_i (2s)^{i-1}. $$

\noindent For sake of contradiction, let $m$ be the largest integer such that $x_m \neq y_m$.  If $x_m - y_m > 0$, then we have

$$0 = \sum\limits_{i = 1}^m  (x_i - y_i)  (2s)^{i-1}\geq (2s)^{m -1} - \frac{s-1}{2s-1}((2s)^{m-1} - 1) > 0,$$

\noindent which is a contradiction.  An analogous argument with $x_m - y_m < 0$ gives another contradiction.  Thus we have $$|B|  \geq \frac{s^{n-2}}{n} \geq N^{1 - \frac{1}{c\sqrt{\log N}}},$$

\noindent where $c$ is an absolute constant.  Now if $B$ contained a 3-term arithmetic progression, then

$$\sum\limits_{i = 1}^n  x_i   (2s + 1)^{i-1} + \sum\limits_{i = 1}^n   z_i (2s + 1)^{i-1} = \sum\limits_{i = 1}^n 2 y_i (2s + 1)^{i-1},$$

\noindent would imply that $x_i + z_i = 2y_i$ for all $i$ by the previous argument.  Hence $(y_1,...,y_n)$ would be the midpoint of $(x_1,...,x_n)$ and $(z_1,...,z_n)$, which is a contradiction since no three members in $S_k$ are collinear.

\end{document}